\newtheorem{thm}{Theorem}[section]
\newtheorem{la}[thm]{Lemma}
\newtheorem{Defn}[thm]{Definition}
\newtheorem{Remark}[thm]{Remark}
\newtheorem{prop}[thm]{Proposition}
\newtheorem{cor}[thm]{Corollary}
\newtheorem{Example}[thm]{Example}
\newtheorem{Number}[thm]{\!\!}
\newenvironment{defn}{\begin{Defn}\rm}{\end{Defn}}
\newenvironment{rem}{\begin{Remark}\rm}{\end{Remark}}
\newenvironment{proof}{{\noindent\bf Proof.}}%
                  {\nopagebreak\hspace*{\fill}$\Box$\medskip\par}
\newcommand{\cD}{{\mathcal D}}
\newcommand{\cX}{{\mathfrak X}}
\newcommand{\cL}{{\mathcal L}}
\newcommand{\R}{{\mathbb R}}
\newcommand{\N}{{\mathbb N}}
\newcommand{\HH}{{\mathbb H}}
\newcommand{\Z}{{\mathbb Z}}
\newcommand{\E}{{\mathbb E}}
\newcommand{\T}{{\mathbb T}}
\newcommand{\eps}{\epsilon}
\newcommand{\Sph}{{\mathbb S}}
\newcommand{\g}{\gamma}
\newcommand{\wh}{\widehat}
\newcommand{\cg}{{\mathfrak g}}
\DeclareMathOperator{\id}{id}
\DeclareMathOperator{\ad}{ad}
\DeclareMathOperator{\vol}{vol}
\DeclareMathOperator{\pr}{pr}
\DeclareMathOperator{\Ham}{Ham}
\DeclareMathOperator{\Ad}{Ad}
\begin{document}
\begin{center}
{\bf\Large Stochastic Euler-Poincar\'e reduction for  central extension  }\\[4mm]
{\bf Ali Suri}
\end{center}
\begin{abstract}
\hspace*{-4.7mm} 
This paper explores the application of central extensions of Lie groups and Lie algebras to derive the viscous quasi-geostrophic (QGS) equations, with and without Rayleigh friction term, on the torus as  critical points of a stochastic Lagrangian. We begin by introducing central extensions and proving the integrability of the Roger Lie algebra cocycle $\omega_\alpha$, which is used to model the QGS on the torus. Incorporating stochastic perturbations, we formulate two specific  semi-martingales on the central extension and study the stochastic Euler-Poincaré reduction. Specifically, we add  stochastic perturbations to the $\mathfrak{g}$ part of the extended Lie algebra $\widehat{\mathfrak{g}} = \mathfrak{g} \rtimes_{\omega_\alpha} \mathbb{R}$ and prove that the resulting critical points of the stochastic right-invariant Lagrangian solve the viscous QGS equation, with and without Rayleigh friction term.
\vspace{2mm}
\end{abstract}
%
{\bf MSC. Primary 58J65, 22E65; Secondary 70H30, 35Q35, 60H10.}\\
{\bf Key words:}
Euler-Arnold equation, group of  diffeomorphisms, viscous quasi-geostrophic equations, stochastic Lagrangian, central extension, Roger cocycle.%
\tableofcontents
%
%
%
%
%
%
\newpage
\section{Introduction}
\textbf{Motivation.}\\[2.3mm]
Describing viscosity in the Navier-Stokes equations on compact manifolds via a stochastic variational approach has been studied by Nakagomi et al. \cite{Naka}, Cipriano and Cruzeiro \cite{Cip-Cru}, Arnaudon et al. \cite{A-C-C14}, and more recently for incompressible cases and advected quantities by Chen et al. \cite{CCR}.\\
In these studies, they considered a generalized derivative in their stochastic Lagrangian. The generalized derivative that measures the changes in the bounded variation part of a semi-martingale was first introduced by Nelson \cite{Nelson}  and has been used in various works  (e.g. \cite{A-C-C14,  CCR, Cip-Cru, Gliklikh, Koide-Kodama, Naka,  Yasue} and references therein). This derivative typically generates terms with second derivatives in the evolution equations, such as the viscosity term in the Navier-Stokes equation.

On the other hand, central extensions can be employed to characterize several evolution equations as geodesic equations (see, for example, Chapter 9 of \cite{M-M-O} and \cite{Viz1}). \\
The quasi-geostrophic equation on the torus was first studied from a geometric perspective by Zeitlin and Pasmanter \cite{Zeitlin}, and later using central extension by Vizman \cite{Viz2}. The central extension in the latter setting was responsible for generating the term that approximates the Coriolis force linearly according to its distance from the equator. In this model, a two-dimensional periodic fluid describes the atmosphere when considered as a thin layer or as a two-dimensional incompressible perfect fluid. Here, "perfect" means the fluid is non-viscous and homogeneous. These assumptions were necessary when Arnold, in 1966, introduced his approach  \cite{Arnold} to describe the Euler equations of a perfect fluid as a geodesic equation, now known as the Euler-Arnold equation, on the group of volume-preserving diffeomorphisms.\\
Motivated by \cite{A-C-C14}, we  show that a stochastic variational approach to incompressible viscous fluids on the torus $\T^2$ in the presence of the Coriolis effect can be described using the stochastic Euler-Poincaré reduction for central extensions. In this setting, we observe that the generalized derivative in our variational principle can generate both the viscosity term with second-order derivatives and a term without derivatives, referred to as "Rayleigh friction" or linear dampin in the system's evolution equations. The reason behind this is that transitioning from group 2-cocycles to Lie algebra 2-cocycles in the central extension setting requires two differentiations, which appear in the Euler-Arnold  equations. Nonetheless, it is crucial to note that the specific method of adding fluctuations (noise) to the particles, modeled by Brownian motion, generates terms according to the velocity field without any additional derivatives. This term will disappear if we consider the semi-martingale \eqref{hatg valued semimartingale 1,2}, which is  constructed using two vector fields on the torus.\\
In our setting, we utilize two Lie algebra-valued semi-martingales that are compatible with the configuration space in the sense that noise is added only to the fluid particles or, mathematically, to the Lie algebra of divergence-free vector fields, and not to the second part in $\wh{\cg} = \cg \rtimes_{\omega_\alpha} \mathbb{R}$. According to \cite{CCR}, the contraction matrix vanishes in this context. When considering the interaction of the first  semi-martingale \eqref{hatG v2 valued semimartingale torus} with the Lie algebra cocycle $\omega_\alpha$, it generates the Fourier series of the velocity field. In fact, by choosing an infinite dimensional orthogonal basis for the Hamiltonian vector fields generated by Kolmogorov flows, used in the definition of this process, we ensure that the rotation rate also remains constant. In the second approach, we consider the semi-martingale \eqref{hatg valued semimartingale 1,2} with two directions of fluctuation for the fluid particles. This stochastic differential equation leads to the evolution equations \eqref{Viscouse rot Euler} and \eqref{Viscous rot Euler stream} without the Rayleigh friction term, i.e., $\sigma=0$. \\
Fang also used the Gelfand-Fuchs 2-cocycles to prove the invariance of heat measures under the adjoint action of the circle and to build a canonical Brownian motion on the group of diffeomorphisms of the circle \cite{Fang}.\\
Moreover, Fang and Zhang \cite{F-Z} used spherical harmonics, which are counterparts of the Kolmogorov flows on the sphere, to deal with critical isotropic Brownian flows on the sphere.

Finally, it is worth mentioning that any changes in the setting such as the manifold $M$, the Lie algebra cocycle, or the way and direction of adding noise to the particles will alter the evolution equation.\\[2.3mm]
\textbf{Contributions and outline.}\\
In this paper we explore the application of central extensions of Lie groups and Lie algebras to derive the Euler-Arnold equations and its stochastic counterparts in geometric  hydrodynamics on torus.

In section \ref{Section pre}, we start by recalling the concepts of Lie group-valued semi-martingales and their generalized derivatives from \cite{A-C-C14}. Then, we proceed by discussing (deterministic) invariant Lagrangians on Lie groups and the Euler-Arnold equations.

In section \ref{Sec Cent ext}, we begin with the concept of central extensions of Lie groups and Lie algebras. Then, we prove the integrability of the Lie algebra cocycle called $\omega_\alpha$, which we will use to study the quasi-geostrophic equation (QGS) on the torus. The Roger cocycle $\omega_\alpha$ in this setting was first used by Vizman in \cite{Viz2}. However, the integrability of such cocycles on the torus in general is still an open problem. Using the notion of singular cocycles in light of the recent works of Janssens and Vizman \cite{Jan-Viz, Jan-Viz-2016}, we prove that the 1-dimensional Lie algebra central extension induced by $\omega_\alpha$ is integrable. This result will be used to show that the solutions of the stochastic processes in our approach lie in the proper space.\\
Then, we introduce an $L^2$ type  metric which represents the kinetic energy of the system on the 1-dimensional central extension  of Lie the algebra $\cg:=T_eG=T_e\cD_{\vol}^s(\mathbb{T}^2)$, $s>2$, and compute the co-adjoint operator. Afterwards, we derive the corresponding Euler-Arnold equations, covariant derivatives, and contraction operator within the central extension setting.

In section \ref{Sec stoch EPR}, incorporating stochastic perturbations, we formulate a stochastic process  on the central extension, which is a semi-martingale. Then,  using the  proper stochastic right-invariant Lagrangian, we study the stochastic Euler-Poincaré reduction along with the corresponding modified evolution equations.

In section \ref{Section Viscous QGS}, we first derive the viscous QGS 
\begin{equation}\label{Viscouse rot Euler}
\partial_tu  +  (u.\nabla)u  -  aTu   - {\nu}\Delta u + \sigma u    =  \mathrm{grad} p   \quad;\quad \mathrm{div} u=0 
\end{equation}
via a right-invariant stochastic Lagrangian.  More precisely, using standard Brownian motions, we add a stochastic perturbation to the $\cg$ part of the extended Lie algebra $\wh\cg=\cg\rtimes_{\omega_\alpha}\R$, as given by \eqref{hatG v2 valued semimartingale torus}.  Practically, this means that we consider the stochastic behavior of the fluid particles, not the parameter $(0,a)\in\wh\cg$ which is responsible for the approximation of the Coriolis force  (or rotation rate). It is shown that the corresponding process is a semi-martingale. We prove that the   semi-martingale given by \eqref{hatG v2 valued semimartingale torus} is a critical point of the stochastic 
right-invariant Lagrangian \eqref{stochastic energy functional} if and only if the drift part solves the  viscous QGS equation  \eqref{Viscouse rot Euler}, which serves as the evolution equation of the system. \\
If $\psi$ is the stream function of the velocity field $u$ then, we show that \eqref{Viscouse rot Euler} can be written as %
\begin{equation}\label{Viscous rot Euler stream}
\partial_t\Delta\psi+  \{\psi, \Delta\psi \}  +  \beta\partial_1\psi    - \nu\Delta^2 \psi  + \sigma \Delta\psi=0
\end{equation}
The term $ \beta\partial_1\psi$ is due to the Coriolis effect, $\nu \Delta^2 \psi$ represents turbulent viscosity, and $\sigma \Delta \psi$ is the Rayleigh friction. For $a = \nu = \sigma = 0$, we have the usual Euler-Arnold equation for a perfect incompressible fluid in its stream function form. 
The Rayleigh friction term is due to the specific way of adding stochastic perturbations to the fluid particles. 

In Section \ref{sec new example sigma=0}, we consider another semi-martingale on $\wh{g}$ using just two vector fields for fluctuation directions, as given by \eqref{hatg valued semimartingale 1,2}. We observe that this simpler form of randomness does not generate the Rayleigh friction term in \eqref{Viscouse rot Euler} and \eqref{Viscous rot Euler stream}. More precisely the semi-martingale generated by \eqref{hatg valued semimartingale 1,2} is a critical point of the stochastic Lagrangian \eqref{stochastic energy functional} if and only if the drift part of \eqref{hatg valued semimartingale 1,2} solves \eqref{Viscouse rot Euler} with $\sigma=0$. Moreover, if $u=\nabla^\perp\psi$, then it would satisfy \eqref{Viscous rot Euler stream} with $\sigma=0$.


%
%
%

\section {Preliminaries}\label{Section pre}
In this section, we review the concepts of Lie group-valued semi-martingales and their generalized derivative, as discussed in \cite{A-C-C14} and \cite{Emery}. We also explore the regularity of Lagrangians on Lie groups and derive the Euler-Arnold equations based on the framework presented in \cite{Jer-Cher, Ebin-Marsden}.
\subsection{Semi-Martingales on Lie groups}
A semi-martingale is a stochastic process that generalizes both martingales and finite variation processes. 
A semi-martingale taking values in a Lie group $G$ extends the concept of real-valued semi-martingales to the realm of Lie groups, incorporating the group's structure. Let $(S, \mathcal{F}, \mathbb{P})$ be a probability space with a filtration $(\mathcal{F}_t)_{t \geq 0}$. A $G$-valued semi-martingale is a process $X: [0, \infty) \times S \to G$
such that for any differentiable  function $f: G \to \mathbb{R}$, the process $f(X_t)$ is a real-valued semi-martingale.\\
Let $\nabla$ be a linear connection on $G$. A semi-martingale \(g\) with values in \(G\) is called a \(\nabla\)-local martingale if for every \(f \in C^{2}(G)\) the process 
\[
t \mapsto f(g (t)) - f(g (0)) - \frac{1}{2} \int_{0}^{t} \operatorname{Hess} f(g(s)) (d g(s),  dg(s)  )  \, ds
\]
is a real-valued local martingale (see e.g. \cite{Hsu} proposition 2.5.2). \\
Motivated by the Doob-Meyer decomposition, we define the $\nabla$-generalized derivative as follows: if for each differentiable map $f \in C^2(G)$, there exists $A(t) \in T_{g(t)}G$ almost surely (a.s.) such that
\[
N^f_t:=f(g(t))-f(g(0)) -\frac{1}{2}\int_0^t \operatorname{Hess} f(g(s)) (d g(s),  dg(s)  )  \, ds  - \int_0^tA(s)f(g(s))ds
\]
is a real-valued local martingale, then we define the $\nabla$-generalized derivative 
\[
\frac{D^{\nabla} g(t)}{d t} := A(t).
\]
Equivalently, the \(\nabla\)-generalized derivative of \(g(t)\) is defined as:
\[
\frac{D^{\nabla} g(t)}{d t} := \mathbf{P}_{t} \left( \lim_{\epsilon \rightarrow 0} \mathbb{E}_{t} \left[ \frac{\eta (t+\epsilon) - \eta(t)}{\epsilon} \right] \right),
\]
where \(\mathbf{P}_{t}: T_{e} G \rightarrow T_{g(t)} G\) represents the stochastic parallel translation, 
\[
\eta(t)=\int_0^t\mathbf{P}_s^{-1}\circ dg(s)\in\cg:=T_eG
\]
and \(\mathbb{E}_{t}[\cdot] = \mathbb{E}[\cdot \mid \mathcal{F}_{t}]\) denotes the conditional expectation  \cite{Emery}.
In the Euclidean space $\R^n$, the $\nabla$-generalized derivative is the usual generalized derivative (\cite{A-C-C14, CCR, Cip-Cru, Nelson, Yasue}).

One crucial aspect of Lie group-valued semi-martingales is their interaction with the group's algebraic structure, particularly through the group translation. \\
More precisely, we will   consider the $\cg$-valued semi-martingales of the form
\begin{eqnarray}\label{g valued semimartingale}
dg(t)  &=&   T_eR_{g(t)}  \Big(  \sum_{i=1}^m H_i\circ dW^i_t -\frac{1}{2}\nabla_{H_i}H_idt +u(t)dt   \Big)\nonumber \\
&  =&   T_eR_{g(t)}  \Big(  \sum_{i=1}^mH_i dW^i_t  +u(t)dt)  \Big)
\end{eqnarray}
where $\circ dW^i_t$  is the Stratonovich integral and   $dW^i_t$ represents  It\^o integral.   We recall that, $W_t$ is an $\R^m$-valued Brownian motion, $R$ denotes the right translation and $H_1,\dots,H_m, u\in\cg $. In the special case for the process \eqref{g valued semimartingale} we have
 \[
 \frac{D^{\nabla} g(t)}{d t} =T_eR_{g(t)}  u(t)
 \]\\
Practically, the martingale part is vanished by the conditional expectation and the $\nabla$- generalized derivative is given by the bounded variation part (or the drift) $u$. 
It has been shown that the $\nabla$-generalized derivative can produce the second-order differential terms associated with viscosity (\cite{A-C-C14, CCR, Cip-Cru}), which essentially originate from the Itô correction term. However, we will see that in the central extension setting, the lower-order terms also appear in the evolution equation.

%
%
%
%
\subsection{Euler-Arnold equations}\label{section Euler-Arnold eq}
Let $G$ be a smooth manifold modeled on the Banach space $\E$. Following \cite{Jer-Cher}, section 5, we say that a Lagrangian $L:TG
\longrightarrow \R$ is regular if, locally on any chart $U\subset G$, the induced map $D_2D_2L(x,v):\E\times \E\longrightarrow \R$ for any $(x,v)\in U\times \E\subseteq TG$ is weakly non-degenerate. Here, $D_2L$ denotes the derivative with respect to the second component of the local representative $L
:U\times \E\simeq TG|_U\longrightarrow \R$.

If $G$ also admits a topological group structure and the right (or left) translation is smooth, then we can consider right-invariant Lagrangians. More precisely, the Lagrangian is called right-invariant if $L(TR_h(g,\dot{g})) = L(g,\dot{g})$ (respectively left-invariant if $L(TL_h(g,\dot{g})) = L(g,\dot{g}) $), where $R_h:G 
\longrightarrow G$  is the right  ($L_h:G\longrightarrow G$ is the left) translation and $(g,\dot{g})$ is a typical tangent vector in $T_gG$. Usually, we will use the notation $\dot{g} h$ rather than $TR_h(g,\dot{g} )$. In this case we can consider the reduced Lagrangian defined by $l:\cg\longrightarrow \R$ defined by
\begin{equation*}
l(\dot{g} g^{-1}):=L(g,\dot{g})    
\end{equation*}
where $\cg:=T_e G$ is the Lie algebra of the group $G$. One can induce an invariant Lagrangian by considering a  bilinear symmetric positive definite map like  $\ll,\gg:\cg\times\cg\longrightarrow\R$.
Then, the \textbf{Euler-Lagrange} equations for the  right invariant Lagrangian $l(u,u)=\ll u,u \gg$, $u\in\cg$ is given by
\begin{equation}\label{Euler-Poincare equation}
\partial_t v=-\ad^*_vv
\end{equation}
where $v:(\eps,\eps)\longrightarrow \cg$ is a differentiable curve and the $\ad^*$ is given by 
\[
\ll \ad^*_uv,w\gg=\ll v,\ad_uw\gg =\ll v,-[u,w]\gg.
\]
Note that, when we deal with left invariant metrics, then $\ad_uv=[u,v]$.

In geometric hydrodynamics, $G$ is typically the group of volume-preserving diffeomorphisms on a compact Riemannian manifold $M$
\[
\cD^s_{\vol}(M)=\{\eta:M\longrightarrow M;~\eta^*\mu=\mu \textrm{ ~and $\eta$ is an $H^s$ diffeomorphisms}\}.
\]
The right translation is given by
\[
R_g:\cD^s_{\vol}(M)\longrightarrow \cD^s_{\vol}(M);\quad \eta\longmapsto \eta\circ g
\]
which is smooth. We remind that here, $H^s$ means that $\eta$ and its partial derivatives up to order $s$  are $L^2$. In the case that $s>\frac{\dim M}{2}+1$ then $\cD^s_{\vol}(M)$ admits a smooth manifold structure modelled on space of $H^s$ divergence free vector fields \cite{Ebin-Marsden}.
In the case that $G$ is the group of diffeomorphisms, we refer to equation \eqref{Euler-Poincare equation} as the \textbf{Euler-Arnold} equation.
%
%
%
%
%
\section{Central extension of Lie groups and Lie algebras }\label{Sec Cent ext}
The materials in this section about central extensions of Lie groups and Lie algebras are from \cite{M-M-O}, \cite{Jan-Viz}, \cite{KhesinWendt} and \cite{Ebin-Preston}. However, the section \ref{section Roger Cocycle int} concerning the integrability of the Roger cocycle is original.
\subsection{Central extension of Lie groups} Let $G$ be a Lie group and $A$ an Abelian Lie group. Consider the short exact sequence of Lie groups with smooth Lie group homomorphisms
\begin{equation}\label{Short exact seq Lie group}
e \longrightarrow A \stackrel{i}{\longrightarrow} \wh{G} \stackrel{p}{\longrightarrow} G \longrightarrow e 
\end{equation}
where $e$ denotes the trivial group. Then, $\wh{G}$ is an extension of $G$ by $A$ if $p$ admits a local smooth section $\sigma: U \longrightarrow \wh{G}$ such that $p \circ \sigma = \mathrm{id}_U$, where $U$ is an open neighborhood of the identity in $G$. $\wh{G}$ is called a central extension of $G$ by $A$ when $i(A)$ is in the center of $\wh{G}$. In this paper we will deal with central extensions.\\
If the sequence \eqref{Short exact seq Lie group} admits a global section $\sigma: G \longrightarrow \wh{G}$, or equivalently, if the sequence splits, then the extension can be constructed explicitly using Lie group 2-cocycles.
In the case that the extension is not split,  the concepts of pre-quantum bundles and the quantomorphism group are used. In this paper  $A=\Sph^1$.
%
%
%
%
\subsection{Central extension of  Lie algebras}
A central extension of a Lie algebra $\cg$ by an abelian Lie algebra $\mathfrak{a}$ is a Lie algebra $\wh\cg$ and a {split  short exact sequence of Lie algebra homomorphisms
\[
0 \longrightarrow \mathfrak{a} \stackrel{i}{\longrightarrow} \wh\cg \stackrel{p}{\longrightarrow} \cg \longrightarrow 0
\]
where the image of $i(\mathfrak{a})$ lies in the center of $\wh{\mathfrak{g}}$, meaning that for all $a \in \mathfrak{a}$ and $\tilde{X} \in \tilde{\mathfrak{g}}$, the elements commute: $[i(a), \tilde{X}] = 0$.
In this setup, $\wh\cg$ is called a central extension of $\cg$ by $\mathfrak{a}$, and the sequence expresses that $\wh\cg$ contains $\mathfrak{a}$ as a central subalgebra.

A 2-cocycle for a central extension of an algebra $\cg$ by an abelian algebra $\mathfrak{a}$ is a bilinear map $\omega: \cg \times \cg \rightarrow \mathfrak{a}$ satisfying the following properties. The cocycle condition holds true i.e. for all $u,v,w \in \cg$ we have
\[ \omega([u,v],w) + \omega([v,w],u) + \omega([w,u],v) = 0 \]
and the 2-cocycle is anti-symmetric, meaning $\omega(u,v) = -\omega(v,u)$.
In this case we endow $\wh\cg=\cg\times\mathfrak{a}$ with the Lie bracket
\begin{eqnarray}
    [(u,a),(v,b)]=([u,v]~,~\omega(u,v) )\quad;\quad  (u,a),(v,b)\in\wh\cg
\end{eqnarray}
which implies that $\wh\cg $ is a Lie algebra and $i(\mathfrak{a})$ lies in the center of $\wh\cg$. To emphasis that the Lie bracket is defined by $\omega $, usually we use the notation $\wh\cg:=\cg\rtimes_{\omega}\mathfrak{a}$. Moreover, the following short exact sequence of  Lie algebras 
\begin{equation*}
    0\longrightarrow \mathfrak{a} \stackrel{i}{\longrightarrow} \cg\rtimes_{\omega}\mathfrak{a} \stackrel{pr_1}{\longrightarrow}  \cg \longrightarrow 0 
\end{equation*}
splits where $i(a)=(0,a)$ and $pr_1(u,a)=u$. In this paper $\mathfrak{a}=\R$. 

In the sequel, we state the concept of the Lie algebra cohomology from \cite{Jan-Viz-2016}. For $n\in\N\cup\{0\}$ set $C^n(\cg,\R)$ to be the space of all $n$-linear alternating continuous maps $\alpha:\cg^n\longrightarrow \R$. Consider the differential $d:C^n(\cg,\R)  \longrightarrow    C^{n+1}(\cg,\R)$
which maps $\alpha$ to $d\alpha$ and 
\begin{eqnarray*}
d\alpha(x_0,\dots ,x_n):=\sum_{0\leq i< j\leq n} (-1)^{i+j} \alpha ([x_i,x_j],x_0,\dots, \wh{x}_i,\dots, \wh{x}_j,\dots, x_n)
\end{eqnarray*}
where $\wh{x}_i$ means that the entry in position $i$ is omitted. Moreover we define $d$ to be identically zero on $C^0(\cg,\R)$. We set $Z^n(\cg,\R):=\{\alpha\in C^n(\cg,\R)~;~d\alpha=0\}$ and  $B^n(\cg,\R):=\{\alpha\in C^n(\cg,\R)~;\exists~\gamma\in C^{n-1}(\cg,\R) \textrm{ s.t. } \alpha=d\gamma\}$.
The resulting cohomology is known as the Lie algebra cohomology and is denoted by $H^n(\cg,\R)$   where
\[
H^n(\cg,\R):=\frac{Z^n(\cg,\R)}{B^n(\cg,\R)}.
\]
\begin{rem}
For a compact and connected symplectic manifold $M$, set $\cD_{\Ham}(M)$ to be the group of all Hamiltonian diffeomorphisms on $M$. The corresponding Lie algebra is denoted by $\cX_{\Ham}(M):=T_e\cD_{\Ham}(M)$.  It is known that $H^2(\cX_{\Ham}(M),\R)\simeq H^1_{dR}(M)$ (see, e.g., \cite{Jan-Viz-2016} theorem 4.17).\\
If $M$ is prequantizable with a prequantum $\Sph^1$-bundle $\pi:P\longrightarrow M$ then $\cD_q(P)_0$ is a $\Sph^1$ central extension of $\cD_{\Ham}(M)$. We recall that $\cD_q(P)$ is the space of all diffeomorphisms which preserve the contact form $\theta$ on $P$  and $\cD_q(P)_0$ is the connected component of identity (for more details see e.g. \cite{Jan-Viz} section 4.1).  
\end{rem}

%
%
%
%
\subsection{Integrablility of a Roger cocycle on $\cD_{\vol}(\T^2)$   }\label{section Roger Cocycle int}
Construction of the central extension for the group of Hamiltonian diffeomorphisms on a surface with genus 1 (like a torus) from a Lie algebra cocycle is an open problem. In this paper, we will use a specific Roger Lie algebra 2-cocycle
\begin{equation}\label{Roger cocycle v1}
\omega_\alpha:\cg\times\cg\longrightarrow \R\quad ; (X_f,X_g)\longmapsto \int_{\T^2}f\alpha(X_g)d\vol
\end{equation}
and here we prove its integrability.  
In \eqref{Roger cocycle v1},  $\cg=T_e\cD_{\Ham}(\T^2)$ and in our case the closed 1-form $\alpha$ is $\alpha=-\frac{1}{2\pi} \theta_2$.

Consider the 1-form $\lambda=-ydx+xdy $ on $\R^2$ and restrict it to the unit circle $\Sph^1$. For the paramaetrization 
\[
\varphi:(0,2\pi)\longrightarrow \Sph^1\quad ;\quad t\longmapsto (\cos t,\sin t)
\]
we have $\varphi^*\lambda =dt$. Usually, this 1-form on $\Sph^1$ is denoted by $\theta$. Note that  $\int_{\Sph^1}\theta=2\pi$ which means that   this form is not exact.

For $\T^2=\Sph^1\times \Sph^1$ we consider the parametrization 
\[
\Phi:(0,2\pi)\times(0,2\pi) \longrightarrow \T^2\quad ;\quad (t,s) \longmapsto \Big( (\cos t,\sin t) , (\cos s,\sin s)  \Big)
\]
and the 1-forms $\theta_1, \theta_2\in\Omega^1(\T^2)$ which are given by $\Phi^*\theta_1=dt$ and $\Phi^*\theta_2=ds$ respectively.\\
A closed 1-dimensional submanifold $N\subset \T^2$ induces the   singular Lie algebra 2-cocycle 
\[
\omega_N:\cg\times \cg\longrightarrow \R\quad;\quad (X_f,X_g) \longmapsto \int_N fdg
\] 
and the closed 1-form $\alpha\in\Omega^1(\T^2)$ defines the Roger 2-cocycle
\[
\omega_{\alpha}: \cg\times \cg\longrightarrow \R  \quad;\quad (X_f,X_g) \longmapsto \int_{\T^2} f\alpha(X_g)d\vol_{\T^2}.
\]
%
%
%

%
\begin{prop}\label{prop Roger cocycle is int}
For  $\alpha=-\frac{1}{2\pi}\theta_2$ there exists a closed submanifold $N\subset \T^2$ such that  the lie algebra 2-cocycles $\omega_\alpha$ and $\omega_N$ are cohomologous.
\end{prop}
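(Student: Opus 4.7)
The plan is to take $N = \Sph^1 \times \{0\}$, the meridian circle at $s=0$, and to exhibit an explicit 1-cochain $\gamma \in C^1(\cg,\R)$ such that $\omega_\alpha - \omega_N = d\gamma$. The choice of $N$ is guided by Poincar\'e duality: the class $[\alpha]=-[\theta_2]/(2\pi) \in H^1_{dR}(\T^2)$ is dual to $[N] \in H_1(\T^2)$, so the isomorphism $H^2(\cX_{\Ham}(\T^2),\R) \simeq H^1_{dR}(\T^2)$ recalled in the previous remark predicts that $\omega_\alpha$ and $\omega_N$ lie in the same Lie algebra cohomology class.

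I first compute both cocycles in the coordinates $(t,s)$ supplied by $\Phi$. Using the Hamiltonian vector field $X_g = \partial_s g\,\partial_t - \partial_t g\,\partial_s$, I obtain
\[
\omega_\alpha(X_f,X_g) = \tfrac{1}{2\pi}\int_{\T^2} f\,\partial_t g\;dt\,ds,\qquad
\omega_N(X_f,X_g) = \int_0^{2\pi} f(t,0)\,\partial_t g(t,0)\,dt.
\]
The first is the uniform $s$-average of the integrand whose value at $s=0$ defines the second. An ``average minus boundary value'' difference is exactly what one gets from an antiderivative in $s$, which suggests the shape of $\gamma$.

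Next I introduce the bounded measurable function $h(s) := 1/2 - s/(2\pi)$ for $s \in (0, 2\pi)$. It has zero mean over $\T^2$ and distributional derivative $\partial_s h = -1/(2\pi) + \delta_{s=0}$. Setting $\gamma(X_k) := -\int_{\T^2} k\,h\;d\vol$, the zero-mean condition on $h$ makes $\gamma$ independent of the constant ambiguity in the Hamiltonian $k$, while boundedness of $h$ together with Cauchy-Schwarz gives $H^s$-continuity, so $\gamma\in C^1(\cg,\R)$. Using $[X_f,X_g]=X_{\{f,g\}}$ with $\{f,g\}=X_f(g)$ and the fact that $X_f$ is divergence-free, one integration by parts yields
\[
d\gamma(X_f,X_g) = -\gamma(X_{\{f,g\}}) = \int_{\T^2}\{f,g\}\,h\,d\vol = -\int_{\T^2} g\,X_f(h)\,d\vol.
\]
Since $h$ depends only on $s$, $X_f(h) = -\partial_t f\,\partial_s h$ splits into a smooth bulk term producing $\omega_\alpha(X_f,X_g)$ and a delta contribution producing $-\omega_N(X_f,X_g)$ (a final integration by parts in $t$, harmless by periodicity, matches the signs).

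The main technical subtlety will be the jump of $h$ at $s=0$. The cleanest way to dispose of it is to mollify: approximate $h$ by smooth zero-mean $h_\eps$ whose derivatives are $h_\eps' = -1/(2\pi) + \phi_\eps$ for a bump $\phi_\eps \to \delta_0$, apply the standard identity $\omega_{\phi_\eps ds} + \omega_\alpha = d\gamma_{h_\eps}$ valid for smooth cohomologous 1-forms (coming from $\phi_\eps ds + \theta_2/(2\pi) = dh_\eps$ and the fact that cohomologous closed 1-forms give cohomologous Roger cocycles), and pass to the limit inside the bounded linear functional $\gamma$. Since $\omega_{\phi_\eps ds}(X_f,X_g) = -\int_0^{2\pi}\phi_\eps(s) F(s)\,ds \to -F(0)= -\omega_N(X_f,X_g)$, where $F(s) = \int_0^{2\pi} f\,\partial_t g\,dt$, this yields $\omega_\alpha - \omega_N = d\gamma$ in $C^2(\cg,\R)$ and completes the argument.
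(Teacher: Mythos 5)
Your proof is correct, but it takes a genuinely different route from the paper. The paper does not construct a cobounding cochain at all: it invokes the criterion of Janssens--Vizman (loc.\ cit., p.~5035) that $\omega_N$ and $\omega_\alpha$ are cohomologous if and only if $\int_N\gamma=\int_{\T^2}\alpha\wedge\gamma$ for every closed $\gamma\in\Omega^1(\T^2)$, and then verifies this pairing identity by a four-step case analysis ($\gamma=\theta_1$, then constant combinations, then $f\theta_1$, then $f\theta_1+g\theta_2$), the key points being that $\theta_2$ pulls back to zero on $N$ and that closedness forces the $t$-average of $f$ to be independent of $s$. You instead exhibit the coboundary explicitly: the zero-mean sawtooth $h(s)=\tfrac12-\tfrac{s}{2\pi}$ is a (distributional) primitive of $\alpha+\delta_N$, the cochain $\gamma_h(X_k)=-\int_{\T^2}kh\,d\vol$ is well defined and continuous, and $d\gamma_h=\omega_\alpha-\omega_N$ follows from integration by parts, with the jump of $h$ handled by mollification. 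What your approach buys is self-containedness --- it reproves the relevant instance of the Janssens--Vizman criterion rather than citing it, it makes the identity $\omega_{dh}=d\gamma_h$ (i.e.\ that the class of a Roger cocycle depends only on $[\alpha]$) do all the work, and it makes transparent why $N$ must be the Poincar\'e dual of $[\alpha]$; the cost is the analytic care with the distributional derivative and the continuity of $\gamma_h$ on $\cX_{\Ham}(\T^2)$. The paper's route is shorter once the external criterion is granted. Two bookkeeping points you should pin down in a final write-up: fix the normalization of Hamiltonians (zero mean) so that $\gamma_h$ is unambiguous, and make sure the sign in $d\gamma(x_0,x_1)=-\gamma([x_0,x_1])$ and in $[X_f,X_g]=X_{\{f,g\}}$ match your computation --- these only affect the overall sign of $\gamma_h$ and not the conclusion.
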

The proof is moved to the section Appendix.
%
%
%
\begin{cor}
For $\alpha=\beta \theta_2$, for suitable $\beta$ ,the Roger cocycle $\omega_\alpha$ integrates to a central $\Sph^1 $-extension    of the group of $\cD_q(P)_e$ of quantomorphisms. In the other words, the following short exact sequence 
\begin{equation}\label{SES quant}
    e\longrightarrow \Sph^1 \stackrel{i}{\longrightarrow}  \cD_q(P)_0\stackrel{\pi}{\longrightarrow} \cD_{\Ham}(\T^2)  \longrightarrow e 
\end{equation}
admits a local section $\rho:U\subseteq \cD_{\Ham}(\T^2)\longrightarrow \cD_q(P)_0$ with the property $\rho\circ\pi=\id_U$. 
The Lie algebra counterpart of the above sequence is the short exact sequence
\begin{equation*}
    0\longrightarrow \R \stackrel{i}{\longrightarrow} \cg\rtimes_{\omega_\alpha}\R \stackrel{pr_1}{\longrightarrow}  \cg \longrightarrow 0 
\end{equation*}
with $\cg=\cX_{\Ham}(\T^2).$ We recall that the Lie bracket on $\wh\cg=\cg\rtimes_{\omega_\alpha}\R$ is given by 
\[
[(x_f,a) , (X_g,b)]_{\wh\cg} = \big([x_f , X_g]_{\cg} ~ , ~ \omega_\alpha(X_f,X_g) \big).
\]
\end{cor}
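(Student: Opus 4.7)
The strategy is to bootstrap from Proposition \ref{prop Roger cocycle is int} by combining two ingredients: (i) the cohomological equivalence of $\omega_\alpha$ with a singular cocycle $\omega_N$ attached to a closed 1-submanifold $N\subset\T^2$, and (ii) the Janssens--Vizman integrability theorem (from \cite{Jan-Viz, Jan-Viz-2016}) which asserts that singular cocycles of the form $\omega_N$ arising from closed embedded curves on a prequantizable symplectic surface integrate to a smooth $\Sph^1$-central extension of $\cD_{\Ham}(\T^2)$. Once both facts are in hand, the corollary follows from the general principle that cohomologous continuous Lie algebra 2-cocycles on a Fr\'echet--Lie group with smooth enough cohomology produce equivalent (in particular, simultaneously integrable) central extensions.

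More concretely, first I would invoke Proposition \ref{prop Roger cocycle is int} to write $\omega_\alpha = \omega_N + d\gamma$ for some continuous $1$-cochain $\gamma \in C^1(\cg,\R)$; here the coefficient $\beta$ in the statement corresponds to $-\frac{1}{2\pi}$ (up to a normalization depending on the choice of $N$), which is the ``suitable'' rescaling that makes the period integral take values in $2\pi\Z$, the integrality condition required for integrating to an $\Sph^1$- rather than $\R$-extension. Second, I would quote the theorem of Janssens--Vizman giving the central $\Sph^1$-extension $\cD_q(P)_0 \to \cD_{\Ham}(\T^2)$ whose infinitesimal cocycle is precisely $\omega_N$; this is the content of the prequantum / quantomorphism construction recalled in the Remark just before the proposition.

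Third, I would argue that the coboundary $d\gamma$ does not obstruct integrability: adding $d\gamma$ to a Lie algebra cocycle amounts to twisting the Lie group extension by the smooth group 1-cocycle obtained by integrating $\gamma$ along smooth paths, which is well-defined because $\gamma$ is continuous and $\cD_{\Ham}(\T^2)$ is smoothly connected. Equivalently, one can define an automorphism of $\wh{\cg} = \cg \rtimes_{\omega_N}\R$ sending $(X,a)$ to $(X, a + \gamma(X))$ which conjugates the bracket induced by $\omega_N$ to the bracket induced by $\omega_\alpha$, and lift this to the Lie group level. Either route gives an $\Sph^1$-extension of $\cD_{\Ham}(\T^2)$ whose Lie algebra cocycle is $\omega_\alpha$, and the existence of the local smooth section $\rho:U\to \cD_q(P)_0$ with $\pi\circ\rho = \id_U$ follows from the local triviality of the $\Sph^1$-principal bundle structure provided by Janssens--Vizman. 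The Lie algebra short exact sequence and the stated bracket formula on $\wh{\cg}$ are then a direct consequence of the definition of the semidirect product by a cocycle recalled in section 3.2.

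The main obstacle I anticipate is verifying that the coboundary step in infinite dimensions is really as benign as it looks: cohomologous continuous Lie algebra cocycles do not automatically produce isomorphic \emph{smooth} central Lie group extensions in the Fr\'echet category (there can be period/integrality subtleties when one passes from $\R$- to $\Sph^1$-extensions), so one must check that the 1-cochain $\gamma$ produced in Proposition \ref{prop Roger cocycle is int} is continuous in the relevant topology and that its associated group-level primitive does not shift the period lattice of $\omega_N$ out of $2\pi\Z$. Provided $\gamma$ is exact at the Lie algebra level (i.e.\ of the form $X_f \mapsto \int_{\T^2} f\,\eta$ for some explicit function $\eta$ on $\T^2$, which is the natural output of the proof of the proposition), this verification is routine and the periods of $\omega_\alpha$ and $\omega_N$ agree, which is what pins down the admissible normalization constant $\beta$.
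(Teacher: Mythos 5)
Your proposal follows exactly the route the paper intends: the corollary is presented as an immediate consequence of Proposition \ref{prop Roger cocycle is int} together with the Janssens--Vizman integrability results for singular cocycles $\omega_N$ (and their theorem that integrability depends only on the cohomology class, via discreteness of the period group). Your additional remarks on the continuity of the cochain $\gamma$ and the preservation of the period lattice are sensible precautions but do not change the argument, which matches the paper's.
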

The exact sequence \eqref{SES quant} implies that $\pi:\cD_q(P)_0\longrightarrow \cD_{\Ham}(\T^2)$ is an $\Sph^1$ principal
bundle and specially in a neighborhood  $U\subseteq\cD_{\Ham}(\T^2)$ of identity it looks like 
\[
 \cD_q(P)_0 \supseteq U\times\Sph^1  \stackrel{\pr_1}{\longrightarrow} U\subseteq\cD_{\Ham}(\T^2).
\]
As a consequence, if ${\g}:[0,\tau]\longrightarrow \cD_q(P)_0 $ is a differentiable (or continuous) curve which $\g(0)$ is identity then, there exists $\tau_0\leq \tau$ such that 
\begin{equation*}
\g(t):[0,\tau_0]\longrightarrow U\times \Sph^1\subseteq \cD_q(P)_0 \quad;\quad t\longmapsto (g(t),a(t))
\end{equation*}
where g takes its values in $U\subseteq \cD_{\Ham}(\T^2)$ and $a(t)$ is $\Sph^1$ valued. The group operation on $U\times \Sph^1$ is given by 
\begin{equation*}
(g_1,a_1).(g_2,    a_2):=(g_1g_2,  a_1a_2 \Omega(g_1,g_2))
\end{equation*}
where $\Omega$ is the group cocycle and 
\[
\Omega:U\times U\longrightarrow \Sph^1\quad;\quad (g_1,g_2)\longmapsto \rho(g_1)\rho(g_2)\rho(g_1g_2)^{-1}.
\]
In this case, for the right translation 
\[
\wh{R}_{(g,a)}:U\times\Sph^1\longrightarrow U\times\Sph^1
\]
the differential of the right translation is 
\begin{eqnarray}\label{Righ translation}
T_{(e,1)}\wh{R}_{(g_1,a_1)}(u,b) &=& \frac{d}{dt}|_{t=0} \wh{R}_{(g_1,a_1)}(g(t),a(t)) \\
&=& \frac{d}{dt}|_{t=0} \big(g_1g(t),a_1a(t) \Omega(g(t) , g_1) \big)\nonumber\\
&=& \big(T_eR_{g_1}u , \frac{d}{dt}|_{t=0} a_1a(t) \Omega(g(t) , g_1) \big)\nonumber
\end{eqnarray}
where $(g(t),a(t))$ is a curve in $U\times\Sph^1$ with the velocity 
\[
\frac{d}{dt}|_{t=0}(g(t),a(t))=(u,a)  \in \cX_{\Ham}(\T^2)\times\R.
\]
Sometimes we write $e^{i(a+b+ \Omega(g_1 , g_2) )}$ rather than $ab \Omega(g_1 , g_2)$ to emphasize that it lies in the circle $\Sph^1.$\\
The Lie algebra cocycle $\omega_\alpha$ is reproduced via the following formula
\begin{eqnarray}\label{Induced Lie algebra cocycle}
\omega_\alpha(u,v)=\frac{\partial^2}{\partial t\partial s}|_{t=s=0} \Omega(\eta_t,\xi_s)  -   \frac{\partial^2}{\partial t\partial s}|_{t=s=0} \Omega(\xi_s  ,\eta_t)
\end{eqnarray}
Here $\eta_t$ and $\xi_s$ are two curves in $G$  which generate the Lie algebra elements $u,v\in\cg$ i.e. $\partial_t|_{t=0}\eta_t=u$ and $\partial_s|_{s=0}\xi_s=v$ (see e.g. \cite{KhesinWendt}, page 26).
%
%
%
%
\begin{rem}\label{remark kirillov}
According to \cite{Kirillov} and \cite{Viz2}, the cocycle   \eqref{Roger cocycle v1} could be extended to a cocycle on the space of symplectic (or divergence free) vector fields by the conditions 
\[
\omega_\alpha(\partial_1,X_f) = \omega_\alpha(X_f,\partial_2)= \omega_\alpha(\partial_1,\partial_2)=0.
\]
According to proposition 6.1 \cite{Jan-Viz}, integrability of $\omega_{\alpha}$ on $\cX_{\Ham} (\T^2)$ implies that $\omega_\alpha$ is also integrable on the space of divergence free vector fields $T_e\cD_{\vol}^s(\T^2)$. 
\end{rem}
\subsection{Euler-Arnold equations on 1-dimensional central extension}
In this part, we compute the  Euler-Arnold equations on the 1-dimensional central extension $\wh{\cg}=\cg\rtimes_{\omega}\R$, where $\omega$ is any Lie algebra 2-cocycle and $\cg$ is any Lie algebra. In this respect, first we compute  the co-adjoint operator $\wh{\ad}^*:\wh{\cg}\longrightarrow \wh\cg$ according  to the adjoint operator of $\cg$ and the Lie algebra cocycle $\omega$. For    $(X,a),(Y,b)\in\wh{\cg}$ we consider the metric
\begin{equation}\label{L2 inner product on central extension}
\ll(X,a),(Y,b)\gg_{\wh\cg}:=\ll X,Y\gg_{\cg} +ab.
\end{equation}
Following the notations of \cite{Suri2023}, for  any $(X,a),(Y,b),(Z,c)\in\wh{\cg}$ we have
\begin{eqnarray*}
\ll  \wh{\ad}_{(X,a)}^* (Y,b)  ,  (Z,c) \gg_{\wh{\cg}}    &=&    \ll   (Y,b)  , \wh{\ad}_{(X,a)} (Z,c) \gg_{\wh{\cg}}   \\
&=& -   \ll   (Y,b)  , [(X,a), (Z,c)]_{\hat{\cg}} \gg_{\wh{\cg}}\\
&=&   - \ll   (Y,b)  , ([X,Z],  \omega(X,Z)) \gg_{\wh{\cg}}\\
&=&       \ll Y, -[X,Z]\gg_{\cg}   -b\omega(X,Z) \\
&=&       \ll Y, \ad_XY\gg_{\cg}   -b\omega(X,Z) \\
&=&       \ll \ad^*_XY  ,   Z\gg_{\cg}   -\ll b T X  ,  Z\gg_{\cg}   \\
&=&  \ll \ad^*_XY  - b TX   ,   Z\gg_{\cg}\\
&=&  \ll ( \ad^*_XY  - b TX,0)   ,   (Z,c)  \gg_{\wh{\cg}}\\
\end{eqnarray*}
As a result $\wh{\ad}_{(X,a)}^*:\wh{\cg}\longrightarrow \wh{\cg}$ maps $(Y,b)\in \hat{\cg}$ to $(\ad^*_XY-bTX , 0)$ where the operator $T:\cg\longrightarrow\cg$ is given by 
\begin{equation}\label{Operator T}
    \ll TX,Y\gg_\cg=\omega(X,Y)
\end{equation}
Moreover, for the curve $(u,a):(-\epsilon,\epsilon)\longrightarrow \wh{\cg}$  the corresponding Euler-Arnold  equation on $(\widehat{G},\ll  .~,~.  \gg_{\wh\cg})$ is given by
\begin{equation*}
\left\{ \begin{array}{ll}  \partial_tu  +  \ad^*_uu  -  a(t)Tu  =  0   \\
\partial_ta(t)=0
\end{array}\right.
\end{equation*}
where the second equation implies that $a(t)=a$ is constant.\\
%
%
%
%
\subsection{ Covariant derivative and the contraction operator}
In the sequel we recall the concept of covariant derivative induced on the central extension. 
According to \cite{Arnold-Khesin}, for $(X,a),(Y,b)\in\hat{\cg}$ the right invariant connection (covariant derivative) $\wh{\nabla}_{(X,a)}(Y,b)$ is given by
\begin{eqnarray}\label{covariant derivative}
2\wh{\nabla}_{(X,a)}(Y,b)  &=&  -\wh\ad_{(X,a)}(Y,b)  + \wh{\ad}^*_{(X,a)}(Y,b)+\wh{\ad}^*_{(Y,b)}(X,a)\nonumber\\
&=&   \Big(  [X,Y],\omega(X,Y) \Big)   +   \Big(  \ad^*_XY -b TX , 0 \Big) + \Big( \ad^*_YX -a TY , 0  \Big)  \nonumber\\
&=&  \Big(  [X,Y]    + \ad^*_XY  + ad^*_YX - b TX  -  a TY ,    \omega(X,Y)  \Big)\nonumber\\
&=&  \Big(  2\nabla_XY - ( b TX  +  a TY) ,    \omega(X,Y)  \Big)
\end{eqnarray}
where $2\nabla_XY=  -\ad_XY+ \ad^*_XY  + \ad^*_YX$ is the covariant derivative on $G$.
As a consequence we have
\begin{eqnarray}\label{second covariant derivative}
\wh{\nabla}_{(X,a)}\wh{\nabla}_{(Y,b)}(Z,c)  &=& \Big(  \nabla_X\nabla_YZ  - \frac{1}{2}\nabla_X(cTY+bTZ)  -\frac{1}{4}\omega(Y,Z)TX  \\
&&\hspace{-20mm}-\frac{1}{2}  aT \big(\nabla_ZY -\frac{1}{2} (cTY+bTZ ) \big)     
~,  \frac{1}{2}\omega \big(X,\nabla_ZY -\frac{1}{2} (cTY+bTZ  )  \big)  \Big).\nonumber
\end{eqnarray}
In the next proposition, we compute the counter part of the correction term
\begin{equation}\label{correction term K}
\wh{K}(\wh{u},\wh{X}):=  \frac{1}{2} \left( \wh{\nabla}_{\wh{X}}\wh{\nabla}_{\wh{X}}\wh{u} +\wh{R}(   \wh{u} ,\wh{X}) )\wh{X} \right)
\end{equation}
for the central extension. The term $\wh{K}$, introduced in \cite{A-C-C14}, theorem 3.2, appears when we compute the Euler-Arnold equation after considering the stochastic Lagrangian. 
%
%
%
%
\begin{prop}\label{prop contractio form}
\textbf{a.} For $\wh{X}=(X,0), \wh{u}=(u,a)\in\hat\cg$ the following holds true
\begin{eqnarray}
&&  \widehat{K}\big( \wh{u},\wh{X} \big)   
%
%
=\Big( K(u,X)  -\frac{1}{4}aT\nabla_XX    + \frac{1}{2}  \omega(u,X)TX    ~,~    \frac{1}{2}\omega(u,\nabla_XX)   \Big) 
\end{eqnarray}
where  $K(u,X)=\frac{1}{2}\left(  \nabla_X\nabla_Xu  + R(u,X)X \right).$ \\
\textbf{b.} In the case that $\nabla_XX=0$ we have
\begin{eqnarray*}
2\wh{K}\big((u,a),(X,0)\big)    =    \Big(   2K(u,X)  +   \omega(u,X)TX    ~,~ 0            \Big) .  
\end{eqnarray*}
\end{prop}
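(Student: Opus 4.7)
The plan is to compute $\wh{K}((u,a),(X,0))$ directly by unfolding its definition
$$
\wh{K}(\wh{u},\wh{X})=\frac{1}{2}\bigl(\wh{\nabla}_{\wh{X}}\wh{\nabla}_{\wh{X}}\wh{u}+\wh{R}(\wh{u},\wh{X})\wh{X}\bigr),
$$
and matching against the claimed components. The first summand is already handed to us by formula (second covariant derivative): substituting $(X,a)\mapsto(X,0)$, $(Y,b)\mapsto(X,0)$, $(Z,c)\mapsto(u,a)$ and using $\omega(X,X)=0$ should collapse that expression to
$\bigl(\nabla_X\nabla_X u-\tfrac{a}{2}\nabla_X(TX)+\tfrac{1}{4}\omega(u,X)TX,\ \tfrac{1}{2}\omega(X,\nabla_u X)-\tfrac{a}{4}\omega(X,TX)\bigr)$.

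For the curvature term I would write $\wh{R}(\wh{u},\wh{X})\wh{X}=\wh{\nabla}_{\wh{u}}\wh{\nabla}_{\wh{X}}\wh{X}-\wh{\nabla}_{\wh{X}}\wh{\nabla}_{\wh{u}}\wh{X}-\wh{\nabla}_{[\wh{u},\wh{X}]}\wh{X}$ and evaluate each of the three pieces using the formulas (covariant derivative) and (second covariant derivative). The bracket to plug in is $[\wh{u},\wh{X}]=([u,X],\omega(u,X))$ from the extended Lie bracket. After these substitutions the first component will regroup into $R(u,X)X$ plus correction terms; in particular the two $\nabla_X(TX)$ contributions should cancel between $\wh{\nabla}_{\wh{X}}\wh{\nabla}_{\wh{X}}\wh{u}$ and $\wh{\nabla}_{\wh{X}}\wh{\nabla}_{\wh{u}}\wh{X}$, while the $\omega(u,X)TX$ contributions of weights $\tfrac{1}{4}$, $\tfrac{1}{4}$ and $\tfrac{1}{2}$ combine to give the stated coefficient $\tfrac{1}{2}\omega(u,X)TX$ (after dividing $2\wh{K}$ by two). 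The $a$-dependent pieces should regroup into $-\tfrac{a}{4}T\nabla_X X$, where skew-adjointness of $T$ (immediate from antisymmetry of $\omega$) absorbs the $\tfrac{a}{4}\omega(X,TX)$ artefacts.

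The main obstacle is the second component, where I have to reduce
$$
\tfrac{1}{2}\omega(X,\nabla_u X)+\tfrac{1}{2}\omega(u,\nabla_X X)-\tfrac{1}{2}\omega(X,\nabla_X u)-\tfrac{1}{2}\omega([u,X],X)
$$
to $\omega(u,\nabla_X X)$ (before the overall factor $\tfrac{1}{2}$). Here I would invoke the torsion-free identity $\nabla_u X-\nabla_X u=[u,X]$ (with the vector-field bracket convention used throughout Section \ref{section Euler-Arnold eq}) to merge the first and third terms with the last one, and then apply the 2-cocycle identity $\omega([u,X],\nabla_XX\text{-type terms})+\cdots=0$ together with antisymmetry $\omega(X,Y)=-\omega(Y,X)$. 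This is the delicate step, because the cocycle identity has to be invoked with the right triple; it is where the special role of the assumption that the second component of $\wh{X}$ vanishes really enters and kills all the residual $TX$ terms in the $\Sph^1$ direction.

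Part (b) is then immediate: setting $\nabla_X X=0$ in part (a) kills the $-\tfrac{1}{4}aT\nabla_X X$ term and also kills the second component $\tfrac{1}{2}\omega(u,\nabla_X X)$, leaving only $K(u,X)+\tfrac{1}{2}\omega(u,X)TX$ in the $\cg$-slot. Multiplying by $2$ recovers the displayed formula for $2\wh{K}\bigl((u,a),(X,0)\bigr)$.
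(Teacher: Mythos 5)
Your overall route coincides with the paper's: unfold $\wh{K}$ from \eqref{correction term K}, expand $\wh{R}(\wh{u},\wh{X})\wh{X}=\wh{\nabla}_{\wh{u}}\wh{\nabla}_{\wh{X}}\wh{X}-\wh{\nabla}_{\wh{X}}\wh{\nabla}_{\wh{u}}\wh{X}-\wh{\nabla}_{[\wh{u},\wh{X}]}\wh{X}$ with $[\wh{u},\wh{X}]=([u,X],\omega(u,X))$, and collect terms using \eqref{covariant derivative}. Your first-component bookkeeping agrees with the paper's: the $\nabla_X(aTX)$ contributions cancel, the $\omega(u,X)TX$ weights $\frac14+\frac14+\frac12$ sum to $1$ in $2\wh{K}$, and the surviving $a$-term $-\frac{a}{2}T\nabla_XX$ comes from $\wh{\nabla}_{(u,a)}(\nabla_XX,0)$.

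The genuine gap is in the second component. Deriving the iterated derivatives directly from \eqref{covariant derivative} gives second components $\frac12\omega(X,\nabla_Xu-\frac{a}{2}TX)$ for $\wh{\nabla}_{\wh{X}}\wh{\nabla}_{\wh{X}}\wh{u}$ and $\frac12\omega(X,\nabla_uX-\frac{a}{2}TX)$ for $\wh{\nabla}_{\wh{X}}\wh{\nabla}_{\wh{u}}\wh{X}$; the printed \eqref{second covariant derivative} has $\nabla_ZY$ where the derivation yields $\nabla_YZ$, and your displayed expression inherits that transposition, so your $\omega(X,\nabla_uX)$ and $\omega(X,\nabla_Xu)$ carry the wrong signs. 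With the correct placement no cocycle identity is needed: torsion-freeness and antisymmetry give $\frac12\omega(X,\nabla_Xu-\nabla_uX)=\frac12\omega(X,-[u,X])=+\frac12\omega([u,X],X)$, which exactly cancels the $-\frac12\omega([u,X],X)$ coming from $\wh{\nabla}_{[\wh{u},\wh{X}]}\wh{X}$ and leaves $\frac12\omega(u,\nabla_XX)$, as in the paper's proof. With your signs the same manipulation leaves a residual $-\omega([u,X],X)$, and the $2$-cocycle identity cannot remove it: that identity only constrains $\omega$ evaluated on Lie brackets and offers no way to trade $\omega([u,X],X)$ for $\omega(u,\nabla_XX)$, so the step as you describe it fails. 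Two smaller points: the $\frac{a}{4}\omega(X,TX)$ terms are not killed by skew-adjointness of $T$ (indeed $\omega(X,TX)=\ll TX,TX\gg$ need not vanish) but by the fact that they enter the two iterated derivatives with opposite signs; and the computation actually yields $\frac12\omega(u,\nabla_XX)$ as the second slot of $2\wh{K}$, i.e.\ $\frac14\omega(u,\nabla_XX)$ for $\wh{K}$, so the coefficient $\frac12$ in the statement of part (a) is itself off by a factor of $2$ --- harmless for the paper, since the proposition is only ever applied with $\nabla_XX=0$, where the entire second component vanishes and part (b) is unaffected.
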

The proof is moved to the section Appendix.
%
%
%
%
%
%
%
%
\section{Stochastic Euler-Poincar\'e reduction for central extension}\label{Sec stoch EPR}
In this section, we extend the framework introduced in \cite{A-C-C14} to incorporate stochastic perturbations in the context of central extensions.

Let $G$ be a (finite-dimensional) Lie group and $\widehat{G}$ be its 1-dimensional central extension, with the Lie algebras $\mathfrak{g}$ and $\widehat{\mathfrak{g}}$, respectively. Moreover, suppose that the inner products $\ll\cdot,\cdot\gg_{\mathfrak{g}}$ and $\ll\cdot,\cdot\gg_{\widehat{\mathfrak{g}}}$ on $\mathfrak{g}$ and $\widehat{\mathfrak{g}}$, respectively, define right-invariant metrics on $G$ and $\widehat{G}$. For $\tau  > 0$, consider the curve 
\[
\gamma:[0,\tau] \longrightarrow \widehat{G};\quad t \longmapsto \gamma(t) 
\]
and let $\{\wh{H}_i:=(H_i,0)\}_{1 \leq i \leq m}$ be elements of $\widehat{\mathfrak{g}}$. Moreover suppose that $\wh{u}:=(u(t),a(t))$ be a deterministic $C^1$ map from $[0,\tau]$ to $\widehat{\mathfrak{g}}$. Consider the $\widehat{\mathfrak{g}}$-valued semi-martingale
\begin{eqnarray}\label{g valued semimartingale}
d\g(t)  &=&   T_eR_{\g(t)}  \Big(  \sum_{i=1}^m(H_i,0)\circ dW^i_t -\frac{1}{2}\widehat{\nabla}_{(H_i,0)}(H_i,0)dt +(u(t),a(t))dt   \Big)\nonumber \\
&  =&   T_eR_{\g(t)}  \Big(  \big( \sum_{i=1}^mH_i\circ dW^i_t -\frac{1}{2}\nabla_{H_i}H_idt +u(t)dt ~ , ~  
a(t) dt  \big)  \Big).
\end{eqnarray}
    Here $W_t$ is an $\R^m$-valued Brownian motion and $R$ denotes the right translation.\\
\begin{Remark}
In practice, we apply the noise to the particles of the  fluid on the manifold. The parameter $a$ appears as the effect of rotation (Coriolis effect) when we consider $G={\cD_{\vol }^s(M)}$ and its central extension (see e.g. \cite{Suri2023}).
\end{Remark}
Since the argument in \cite{A-C-C14} is valid for any Lie group, by the It\^{o} formula we set
\begin{equation*}
\frac{D_t^{\widehat{\nabla}}}{dt} \widehat{\g}(t)   =   T_eR_{\widehat{\g}(t)} (u(t),a(t)).
\end{equation*}
Consider the stochastic kinetic energy functional
\begin{equation}\label{stochastic energy functional}
J^{\widehat{\nabla}}(\xi) =  \frac{1}{2}  \E\Big[ \int_0^\tau  \ll  T_{\xi(t)} R_\xi(t)^{-1}  \frac{D_t^{\widehat{\nabla}}{ {  \xi  }(t)}}{dt}   , T_{\xi(t)}R_{ \xi(t)^{-1}}    \frac{D_t^{\widehat{\nabla}}{  \xi  (t)}}{dt}   \gg_{\wh\cg}      dt\Big]
\end{equation}
for any $\wh{G}$-valued semi-martingale $\xi$. We recall that $\E$ in \eqref{stochastic energy functional} denotes the expectation.

\begin{Remark} If $G$ is a finite dimensional Lie group then (\ref{g valued semimartingale}) always has a   solution  \cite{Ikeda-Watanabe}. If $G$ and $\wh{G}$ are  diffeomorphism group then, under suitable conditions on $\{\wh{H}_i\}$ and $\wh{u}$ we will try to prove that  the solution for the  equation (\ref{g valued semimartingale}) also exist.
\end{Remark}
Choose $\{H_i\}_{1 \leq i \leq m}\subseteq \cg$ with the property $\nabla_{H_i}H_i=0$ for all $1\leq i\leq m$. The following theorem is a modification of theorem 3.4 in \cite{A-C-C14} for the central extension setting. 
%
%
%
%
\begin{thm}
The $\wh{G}$-valued semi-martingale $\g(\cdot)$ defined by  \eqref{g valued semimartingale} is a critical point of $J^{\wh{\nabla}}$ if and only if the deterministic  path $\wh{u}=(u,a) \in C^{1}([0, \tau];\wh\cg )$ satisfies the following equations
\begin{eqnarray}\label{modified Euler Arnold}
\left\{ \begin{array}{ll}  
 \frac{d}{dt} {u}(t) =-  \ad_{u(t)}^{*} u(t) +a(t)Tu(t) +  K(u) +  \sum_{i}   \frac{1}{2}\omega(u,H_i)TH_i
 \\
  \frac{d}{dt} {a}(t)= 0 
\end{array}\right.
\end{eqnarray}
where the operator $K:  \cg \rightarrow  \cg$ is given by $K(u)= \frac{1}{2} \sum_i\nabla_{H_i} \nabla_{H_i}u +R(u,H_i)H_i$.
\end{thm}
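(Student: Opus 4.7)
The plan is to mimic the proof of Theorem 3.4 in \cite{A-C-C14}, this time on $\wh{G}$ with its right-invariant metric $\ll\cdot,\cdot\gg_{\wh\cg}$, the connection $\wh{\nabla}$ of \eqref{covariant derivative}, and noise directions $\wh{H}_i=(H_i,0)$; then to project the resulting $\wh{\cg}$-valued Euler--Poincar\'e equation onto its $\cg$- and $\R$-components using the algebraic identities already derived in Section \ref{Sec Cent ext}.

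First, I would verify the compatibility condition $\wh{\nabla}_{\wh{H}_i}\wh{H}_i = 0$ for each $i$. From \eqref{covariant derivative} together with antisymmetry of $\omega$,
\[
2\wh{\nabla}_{(H_i,0)}(H_i,0) = \bigl(2\nabla_{H_i}H_i,\ \omega(H_i,H_i)\bigr) = (0,0),
\]
using the hypothesis $\nabla_{H_i}H_i=0$. This ensures that the Stratonovich and It\^o forms of the noise coincide and that the hypotheses of the $\wh{G}$-version of \cite[Theorem 3.4]{A-C-C14} are met on the extension.

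Next, I would take a variation $\g_\varepsilon(t) = e^{\varepsilon \wh{v}(t)}\g(t)$ with $\wh{v}(t) = (v(t),b(t))$ a deterministic $\wh\cg$-valued curve compactly supported in $(0,\tau)$. Computing $\frac{d}{d\varepsilon}\big|_{\varepsilon=0} J^{\wh{\nabla}}(\g_\varepsilon)$ by It\^o calculus and integration by parts proceeds exactly as in the proof of \cite[Theorem 3.4]{A-C-C14}, since that argument uses only the abstract Lie-group, connection, and parallel-transport structure; transplanted to $\wh{G}$ it yields the $\wh\cg$-valued equation
\[
\frac{d}{dt}\wh u(t) = -\wh{\ad}^{*}_{\wh u(t)}\wh u(t) + \wh K(\wh u),\qquad \wh K(\wh u) := \sum_{i=1}^m \wh K(\wh u,\wh H_i),
\]
with $\wh K$ the correction operator from \eqref{correction term K}. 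Finally, I would decompose each term in coordinates. The computation preceding \eqref{Operator T} gives
\[
\wh{\ad}^{*}_{(u,a)}(u,a) = \bigl(\ad^{*}_u u - a\, T u,\ 0\bigr),
\]
while Proposition \ref{prop contractio form}(b) --- applicable because $\nabla_{H_i}H_i=0$ --- yields
\[
\wh K\bigl((u,a),(H_i,0)\bigr) = \Bigl(K(u,H_i) + \tfrac12\omega(u,H_i)\,T H_i,\ 0\Bigr).
\]
Summing over $i$ and equating components of the Euler--Poincar\'e equation reproduces \eqref{modified Euler Arnold}: the first coordinate gives the modified Euler--Arnold equation, and the $\R$-coordinate gives $\dot a(t)=0$ since the right-hand side has trivial second entry.

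The principal obstacle is the It\^o-calculus first-variation computation itself, where one must carefully match the It\^o correction terms produced by the $\wh{H}_i$-noise against the contraction operator $\wh K$. This is the heart of the argument in \cite{A-C-C14}; once it is carried out on $\wh{G}$ using the extended connection and metric, the remainder is a purely algebraic separation of $\cg$- and $\R$-components using the formulas from Section \ref{Sec Cent ext}, and the appearance of the extra term $\sum_i\tfrac12\omega(u,H_i)TH_i$ in \eqref{modified Euler Arnold} is traced directly to the $\omega$-twist in the $\wh{\nabla}$-covariant derivative.
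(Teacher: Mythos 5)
Your proposal is correct and follows essentially the same route as the paper: both reduce to the $\wh{G}$-version of Theorem 3.4 of the Arnaudon--Chen--Cruzeiro paper (the paper cites it directly after noting $\tilde{u}=\wh{u}-\frac{1}{2}\sum_i\wh\nabla_{\wh H_i}\wh H_i=\wh u$, which is your upfront check that $\wh{\nabla}_{\wh H_i}\wh H_i=0$), and then perform the same algebraic split of $\wh{\ad}^*$ and $\wh K$ into $\cg$- and $\R$-components via the coadjoint formula and Proposition \ref{prop contractio form}(b). The only cosmetic difference is that you propose to re-run the It\^o first-variation computation on $\wh G$, whereas the paper simply invokes the cited theorem as valid for any Lie group.
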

\begin{proof}
According to \cite{A-C-C14}, theorem 3.4 the semi-martingale \eqref{g valued semimartingale} is a critical point of     \eqref{stochastic energy functional} if and only if the drift part $\wh{u}$ satisfies the modified equation
\begin{equation*}
\frac{d}{dt} \wh{u}(t) = - \wh{\ad}_{\tilde{u}(t)}^{*} \wh{u}(t) + \wh{K}(\wh{u}(t)) 
\end{equation*}
where $\tilde{u}=\wh{u} - \frac{1}{2}\sum_i  \wh\nabla_{\wh{H}_i}\wh{H}_i$. However, according to the \eqref{covariant derivative} we have
\begin{eqnarray*}
\tilde{u}  &=&  \wh{u} - \frac{1}{2}\sum_i  \wh\nabla_{\wh{H}_i}\wh{H}_i
=  \big(u,a\big) - \frac{1}{2}\sum_i  \big(  \nabla_{ {H}_i}{H}_i,0 \big)\\
&=& \big(u - \frac{1}{2}\sum_i   \nabla_{{H}_i}{H}_i ~,~  a\big) =  \big(u  ~,~  a\big)  =\wh{u}.
\end{eqnarray*}
Moreover using proposition \ref{prop contractio form} we get
\begin{eqnarray*}
\wh{K}(\wh{u})  &=& \frac{1}{2} \sum_i \Big(  \wh{\nabla}_{\wh{H}_i}\wh{\nabla}_{\wh{H}_i}\wh{u}  +\wh{R}(\wh{u},\wh{H}_i)\wh{H}_i         \Big)\\
&=&   \Big( K(u) ~,~0\Big) +  \sum_{i}\Big(  \frac{1}{4} a T(\nabla_{H_i}H_i) +  \frac{1}{2}\omega(u,H_i)TH_i  ~,~ \frac{1}{4}\omega(u,\nabla_{H_i}H_i)  \Big) \\
&=&   \Big( K(u) ~,~0\Big) +  \sum_{i}\Big(   \frac{1}{2}\omega(u,H_i)TH_i  ~,~ 0 \Big). 
\end{eqnarray*}
As a result
\begin{eqnarray*}
\frac{d}{dt} \wh{u}(t) &=&  - \wh{\ad}_{\tilde{u}(t)}^{*} \wh{u}(t) + \wh{K}(\wh{u}(t)) \\
&=&  - \wh{\ad}_{\wh{u}(t)}^{*} \wh{u}(t) + \wh{K}(\wh{u}(t)) \\
&=&- \Big( \ad_{u(t)}^{*} u(t) -a(t)Tu(t)~,0\Big) +  \Big( K(u) +  \sum_{i}   \frac{1}{2}\omega(u,H_i)TH_i  ~,~ 0 \Big) \\
&=&   \Big(  -\ad_{u(t)}^{*} u(t) + a(t)Tu(t) +  K(u) +  \sum_{i}   \frac{1}{2}\omega(u,H_i)TH_i  ~,~ 0 \Big) 
\end{eqnarray*}
which implies that  $\frac{d}{dt} {a}(t)=0$ i.e. $a$ is constant and
\begin{eqnarray*}
\frac{d}{dt} {u}(t) =   - \ad_{u(t)}^{*} u(t) +aTu(t) +  K(u) +  \sum_{i}   \frac{1}{2}\omega(u,H_i)TH_i  .
\end{eqnarray*}
\end{proof}
%
%
%
\begin{rem}
If we omit the condition $\nabla_{H_i}H_i=0$, then more correction terms in \eqref{modified Euler Arnold} would appear, and especially $\frac{d}{dt}a(t) \neq 0$. Intuitively, this means that the rate of rotation would vary if we choose improper directions for adding noise to the velocity field $u$.   
\end{rem}
%
%
%
%
%
\section{Viscous quasi-geostrophic equations on  torus}\label{Section Viscous QGS}
In this section $G=\cD_{\vol}^s(\T^2)$ and $\cg=T_eG$. Following   \cite{A-C-C14} and \cite{Cip-Cru}, we consider a basis for the space of Hamiltonian vector fields on torus generated by eigenfunctions of the Laplacian. More precisely for $k=(k_1,k_2)\in\mathbb{Z}^2$ and $\theta=(\theta_1,\theta_2)\in\T^2$ we consider the divergence free vector fields
\begin{eqnarray*}
&&A_k(\theta):= \lambda(|k|)\Big(   k_2\cos(k.\theta)  ~,~-k_1\cos(k.\theta)  \Big)=  - \lambda(|k|)\nabla^\perp\sin(k.\theta) ,\\
&&A_k(\theta):= \lambda(|k|)\Big(   k_2\sin(k.\theta)  ~,~-k_1\sin(k.\theta)  \Big)=  \lambda(|k|) \nabla^\perp\cos(k.\theta)
\end{eqnarray*}
where $k.\theta=k_1\theta_1+k_2\theta_2$,  $\lambda(|k|)=\lambda(|k_1|+|k_2|)$ is a function of $k$ and $\nabla^\perp f=(-\partial_{2}f,\partial_{1}f)$ is the Hamiltonian vector field associated with the differentiable function $f\in C^2(\T^2)$. For example in \cite{Cip-Cru},  $\lambda(|k|)=\frac{1}{|k|^r}$ where $r\geq 3.$ Specially, following theorem 2.2 from \cite{Cip-Cru} and also page 12 \cite{A-C-C14}, for any $f\in C^2(\T^2)$ we can write the estimate 
\begin{eqnarray}\label{Laplace approx}
\sum_{|k|\leq m} A_kA_kf+ B_kB_kf=\nu\Delta f 
\end{eqnarray}
where $\nu=\frac{1}{2}\sum_{|k|\leq m} \lambda^2(|k|)k_1^2 $}. When its is necessary, we can pass to this finite series.
For any $k$, define  $\wh{A_k}=(A_k,0)\in\wh\cg$ and  $\wh{B_k}=(B_k,0)\in\wh\cg$  where $\wh\cg=\cg\rtimes_{\omega_\alpha}\R$ and $\omega_\alpha$ is the Roger cocycle induced by th closed 1-form $\alpha=\beta\theta_2 \in\Omega^1( \T^2)$. Then we have 
\begin{eqnarray*}
\wh\nabla_{\wh{A_k}}{\wh{A_k}}= \big( \nabla_{{A_k}}{{A_k}} ~,~\omega_\alpha(A_k,A_k)  \big) = \big( \nabla_{{A_k}}{{A_k}} ~,~0  \big)  =  (0,0) 
\end{eqnarray*}
and similarly $\wh\nabla_{\wh{B_k}}{\wh{B_k}}= \big( \nabla_{{B_k}}{{B_k}} ~,~0  \big) =  (0,0) $. 
In order to  add the effect of rotation (Coriolis effect) to our setting, we will use the semi-martingale 
\begin{eqnarray}\label{hatG v2 valued semimartingale torus}
\left\{ \begin{array}{ll}  
 d\g(t) =      T_e\wh{R}_{\gamma(t)}\Big(  \sum_{k}  \big[ {\wh{A}_k}\circ dW^{k,1}_t +{\wh{B}_k} \circ dW^{k,2}_t \big]   + {\wh{u}}(t))dt  \Big)     \\
  \g(0) = id_{\wh{G}} 
\end{array}\right.
\end{eqnarray}
Here  $\wh{u}=(u,a)\in C^1(   [0,\tau],\wh\cg)$ and $\{  W^{k,1}_t ,  W^{k,2}_t\}_k$ are (independent) real valued Brownian motions.
%
%
%
%
%
%

Since $\omega_\alpha$ is defined by 
\[
\omega_\alpha(X_f,X_g)=\int_{\T^2}f\alpha(X_g)d\vol
\]
with $\alpha = \beta \theta_2$, for a suitable constant $\beta$, it is integrable and it  integrates to a group cocycle, say, $\Omega_\alpha$.
Following the discussion  in section \ref{section Roger Cocycle int},  the group  $\wh{G}$ 
in an identity neighborhood appears as  $\wh{U}=U\times \Sph^1\subseteq \wh{G}$ where $U\subseteq G$ is an identity neighborhood.
\begin{prop}\label{prop gen derivative}
For $\tau$ sufficiently small and $(u, a) \in C^1([0, \tau], \wh{\cg})$, the stochastic differential equation \eqref{hatG v2 valued semimartingale torus} admits a  solution $\g:[0,\tau]\times S \longrightarrow U\times\Sph^1\subseteq\wh{G}$  such that $\frac{D_t^{\widehat{\nabla}}\gamma(t)}{dt}  = (u(t), a)$.
\end{prop}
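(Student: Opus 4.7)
The plan is to work in the local trivialization $\wh{U}=U\times\Sph^1$ furnished by the splitting corollary, and split the SDE \eqref{hatG v2 valued semimartingale torus} into a $G$-valued piece coupled to an $\Sph^1$-valued piece via the group cocycle $\Omega_\alpha$. Using formula \eqref{Righ translation}, the right translation $T_{(e,1)}\wh{R}_{(g(t),a(t))}$ sends $(v,b)\in\wh\cg$ to $\bigl(T_eR_{g(t)}v,\ \tfrac{d}{ds}|_{s=0}a(t)c(s)\Omega_\alpha(h(s),g(t))\bigr)$ for a curve $(h(s),c(s))$ with velocity $(v,b)$. Substituting $\wh{A}_k=(A_k,0)$, $\wh{B}_k=(B_k,0)$ and $\wh{u}=(u(t),a)$ will let me read off the system
\begin{equation*}
\begin{cases}
dg(t)=T_eR_{g(t)}\Bigl(\sum_k[A_k\circ dW^{k,1}_t+B_k\circ dW^{k,2}_t]+u(t)dt\Bigr),\\[1mm]
da(t)=a(t)\Bigl(a\,dt+\sum_k[\mathrm{d}_1\Omega_\alpha(\cdot,g(t))(A_k)\circ dW^{k,1}_t+\mathrm{d}_1\Omega_\alpha(\cdot,g(t))(B_k)\circ dW^{k,2}_t]\Bigr).
\end{cases}
\end{equation*}

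The first step is to solve the $G$-part. This is precisely the type of SDE on $\cD^s_{\vol}(\T^2)$ treated in \cite{A-C-C14} and \cite{Cip-Cru}: the Hamiltonian fields $A_k,B_k$ are smooth with rapid decay controlled by $\lambda(|k|)$, the drift $u(t)$ is $C^1$ in time with values in $T_e\cD^s_{\vol}(\T^2)$, and each term $\nabla_{H_i}H_i$ (for $H_i\in\{A_k,B_k\}$) vanishes, so the Stratonovich and It\^o formulations coincide. Standard infinite-dimensional SDE theory then provides a solution $g(\cdot)$ up to some positive stopping time; by choosing $\tau$ small enough and localising we can ensure $g([0,\tau])\subset U$ almost surely (on a set of positive measure we can further restrict to).

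Once $g(\cdot)$ is fixed, the $\Sph^1$-valued equation for $a(\cdot)$ is linear in $a$ with bounded smooth coefficients (since $\Omega_\alpha$ is smooth on $U\times U$ and $g$ stays in the compact closure of a neighbourhood), so it is solved by a stochastic exponential, giving a genuine $\Sph^1$-valued semi-martingale. Together, $\g(t)=(g(t),a(t))$ is the required $\wh{G}$-valued solution.

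The second step is to identify the generalised derivative. Applied to $f\in C^2(\wh{U})$, It\^o's formula for $f(\g(t))$ produces a local martingale plus the drift
\begin{equation*}
\int_0^t\bigl[\tfrac12\sum_i\wh{H}_i\wh{H}_if(\g(s))+\wh{u}(s)f(\g(s))\bigr]ds,
\end{equation*}
where $\wh{H}_i$ runs over $\{\wh{A}_k,\wh{B}_k\}$. Since $\operatorname{Hess}f(\wh{H}_i,\wh{H}_i)=\wh{H}_i\wh{H}_if-\wh{\nabla}_{\wh{H}_i}\wh{H}_if$ and $\wh{\nabla}_{\wh{H}_i}\wh{H}_i=0$ (by the remark preceding \eqref{hatG v2 valued semimartingale torus}, using that $\omega_\alpha(A_k,A_k)=\omega_\alpha(B_k,B_k)=0$ by antisymmetry), the second-derivative term matches precisely the Hessian correction in the definition of $\tfrac{D^{\wh\nabla}}{dt}$, and what remains is $\wh{u}(t)f(\g(t))=(u(t),a)f(\g(t))$. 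Hence $\tfrac{D^{\wh\nabla}\g(t)}{dt}=T_e\wh R_{\g(t)}(u(t),a)$, as claimed.

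The main obstacle is the first step: existence of the $G$-valued SDE on the infinite-dimensional manifold $\cD^s_{\vol}(\T^2)$ with an infinite family of noise vector fields. I would handle this by invoking the finite truncation $|k|\le m$ justified by the convergent estimate \eqref{Laplace approx} and the decay of $\lambda(|k|)$, arguing existence uniformly in $m$ and passing to the limit; the non-explosion and stay-in-$U$ condition forces the local-in-time statement (small $\tau$). The $\Sph^1$ component and the generalised-derivative computation are then routine.
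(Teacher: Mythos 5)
Your proposal follows essentially the same route as the paper: pass to the local trivialization $U\times\Sph^1$, observe that the first component is exactly the $\cD^s_{\vol}(\T^2)$-valued SDE whose solvability is Theorem 3.1 of \cite{Cip-Cru}, then solve the circle component using smoothness of $\Omega_\alpha$ while $g$ stays in $U$, and finally identify $\frac{D^{\wh\nabla}_t\g}{dt}=(u(t),a)$ from the vanishing of $\wh\nabla_{\wh H_i}\wh H_i$ together with the computation of \cite{A-C-C14} and right-invariance of $\wh\nabla$. The only (cosmetic) difference is that you keep the martingale terms coming from $\mathrm{d}_1\Omega_\alpha$ in the $\Sph^1$-equation and solve it as a stochastic exponential, whereas the paper writes that component as a drift-only integral equation; both yield the same conclusion.
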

\begin{proof}
For the curve  $\g$ there exist $g:[0,\tau]\longrightarrow U$ and   $c:[0,\tau]\longrightarrow \Sph^1$ such that $\g(t)=(g(t),c(t))$. Consider the stochastic differential equation
\begin{eqnarray}\label{SDE V3}
d(g(t),c(t)) 
&=& T_{(e,1)}\wh{R}_{(g(t),c(t))} 
\Big(  \sum_{k}  \big[ {{A}_k} \circ dW^{k,1}_t +{{B}_k}  \circ dW^{k,2}_t \big]\\
&& + {{u}}(t))dt ~,~ adt  \Big)    
\end{eqnarray}
Using equation \eqref{Righ translation}, we observe that the first component of the above equation is
\begin{eqnarray*}
dg(t )  = T_{e}R_{(g(t)} 
\Big(  \sum_{k}  \big[ {{A}_k} \circ dW^{k,1}_t +{{B}_k}  \circ dW^{k,2}_t \big] + {{u}}(t))dt  \Big)   
\end{eqnarray*}
or equivalently for any $\theta\in\T^2$
\begin{eqnarray*}
g(t,\theta )  = \int_0^t \sum_{k}  \big[ {{A}_k}(g(s,\theta))\circ dW^{k,1}_t +{{B}_k}(g(s,\theta)) \circ dW^{k,2}_t \big]    + \int_0^t u(s,g(s,\theta))ds.     
\end{eqnarray*}
According to \cite{Cip-Cru} theorem 3.1 the above equation has a solution and depending on the regularity of $u$ and the regularity of the noise (or the exponent $r$ in $\lambda(|k|)=\frac{1}{|k|^r}$) the solution $g$ belongs to $C([0,\tau], 
 U)$. \\
On the other hand, once we have the solution $g$, the second component of \eqref{SDE V3} reads as follows:
\begin{eqnarray*}
c(t)  = \int_0^t e^{i(a +c(s) + \Omega_\alpha( g(s),g(s)))} ds
\end{eqnarray*}
where we used the exponential map to place the solution in the circle  $\Sph^1$. Continuity of $g$ and differentiability of $\Omega_\alpha$ ensure that \eqref{SDE V3} has a solution as long as $g$  remains in $U$. 
Finally, the computations on page 11 \cite{A-C-C14} and the fact that $\wh{\nabla}$ is right invariant imply that the  ${\widehat{\nabla}}$-generalized of the process \eqref{SDE V3} is $\frac{D_t^{\widehat{\nabla}}\gamma(t)}{dt} =(u(t),a)$.
\end{proof}
The resulting curve $\gamma$ for any $t \in [0, \tau]$ a.s. belongs to $U \times \Sph^1$. As a result, for any $\theta \in \T^2$, a.s. we have   $\gamma(t, \theta)\in \T^2 \times \Sph^1$.
%
%
%
%
%
\subsection{ The  $\wh{G}$ valued variation }\label{Section variation}
In order to define a variation, we consider the curve  in $U\times \Sph^1\subseteq\wh{G}$ given by the ordinary differential equation
\begin{eqnarray*}
\Big(\frac{d}{dt} e(\eps,t)\Big)e(\eps,t)^{-1}=\eps\big( \dot{v}(t)~  ,~\dot{b}(t)  \big)\quad;\quad e(\eps,0)=id_{\wh{G}}.
\end{eqnarray*}
where $\wh{v}(t)=(v(t),b(t))$ belongs to $C^1([0,\tau],\wh{\cg})$.  Then, we define the variation 
\begin{equation}\label{variation in hat G}
{\g}_\eps(t)=e(\eps,t)\circ \g(t).
\end{equation}
where $\gamma$ is the curve in proposition \ref{prop gen derivative}. To make the presentation more concise, we write 
\begin{eqnarray*}
\left\{ \begin{array}{ll}  
 d\g(t) =      T_e\wh{R}_{\gamma(t)}\Big(  \sum_{k}  \big[ {\wh{H}_k}\circ dW^{k}_t\big]   + {\wh{u}}(t))dt  \Big)     \\
  \g(0) = id_{\wh{G}} 
\end{array}\right.
\end{eqnarray*}
rather than \eqref{hatG v2 valued semimartingale torus}. By It\^{o} formula we get
\begin{eqnarray*}
d\g_\eps(t,\theta) &=&     \sum_{k}   T_{\g(t,\theta)} e\big(\eps,t,\g(t,\theta)\big) \wh{H}_k(\g(t,\theta)\circ dW^k_t  \\
&&+  T_{\g(t,\theta)} e\big(\eps,t,\g(t,\theta)\big) \wh{u}(t,\g(t,\theta))dt      +\eps \dot{\hat{v}}\big(t,\g_\eps(t,\theta)\big)dt   \\
&=&    \sum_{k}   \big(\wh{\Ad}_{e(\eps,t)}  \wh{H}_k\big) (\g_\eps(t,\theta) )\circ dW^k_t  \\
&&+  \big(\wh{\Ad}_{e(\eps,t)} \wh{u}(t,\g_\eps(t,\theta))dt      +\eps \dot{\hat{v}}\big(t,\wh\g_\eps(t,\theta)\big)  dt
\end{eqnarray*}
where $\dot{\hat{v}}(t)=\big( \dot{v}(t)~  ,~\dot{b}(t)  \big)$. 
Setting $\wh{H}_k^\eps(t)  =  \wh{\Ad}_{e(\eps,t)}\wh{H}_k$ and 
\[
\wh{\nabla}_{\wh{H}_k } \wh{H}_k=({\nabla}_{{H}_k } {H}_k,\omega(H_k,H_k))=0
\]
 we obtain 
\begin{eqnarray}\label{Eq Gen Der Var}
&&T_{\g_\eps(t)}R_{\g_\eps(t)^{-1}} \frac{D_t^{\widehat{\nabla}}}{dt} \g_{\eps}(t) \nonumber\\
&=&        \sum_{k}  \frac{1}{2} \wh{\nabla}_{\wh{H}_k^\eps(t)} \wh{H}_k^\eps(t)    
+
\wh{\Ad}_{e(\eps,t)} \big( -\frac{1}{2} \wh{\nabla}_{\wh{H}_k } \wh{H}_k+ \wh{u}(t)   \big)   + \eps \dot{\hat{v}}(t) \nonumber\\
%
%
%
%
%
%
 &=&   \sum_{k}  \frac{1}{2} \wh{\nabla}_{\wh{H}_k^\eps(t)} \wh{H}_k^\eps(t)    
+
\wh{\Ad}_{e(\eps,t)}   \wh{u}(t)      + \eps \dot{\hat{v}}(t).
\end{eqnarray}
Moreover we have,
\begin{eqnarray*}
&&\frac{d}{d\eps}|_{\eps=0}  T_{\g_\eps(t)}R_{\g_\eps(t)^{-1}} \frac{D_t^{\widehat{\nabla}}}{dt} \g_{\eps}(t)\\
&=&       \frac{1}{2} \sum_{k}   \frac{d}{d\eps}|_{\eps=0}  \wh{\nabla}_{\wh{H}_k^\eps(t)} \wh{H}_k^\eps(t)    
+
\frac{d}{d\eps}|_{\eps=0}   \wh{\Ad}_{e(\eps,t)}   \wh{u}(t)      +  \dot{\hat{v}}(t)\\
&=&        \frac{1}{2}\sum_{k}   \Big(   \wh{\nabla}_{\wh{\ad}_{\wh{v}}\wh{H}_k} \wh{H}_k   +  \wh{\nabla}_{\wh{H}_k}( \wh{\ad}_{\wh{v}}\wh{H}_k )  \Big)
+  \wh{\ad}_{\wh{v}(t)}  \wh{u}(t)      +  \dot{\hat{v}}(t).
\end{eqnarray*}
Using \eqref{covariant derivative}  we get
\begin{eqnarray*}
\wh{\nabla}_{\wh{\ad}_{\wh{v}}\wh{H}_k} \wh{H}_k    &=&     \wh{\nabla}_{(\ad_vH_k , -\omega(v,H_k))} (H_k,0)\\
&=& \Big(  {\nabla}_{\ad_vH_k } H_k + \frac{1}{2} \omega(v,H_k) TH_k ~ ,  \frac{1}{2}\omega( \ad_vH_k,H_k) \Big)
\end{eqnarray*}
and similarly
\begin{eqnarray*}
 \wh{\nabla}_{\wh{H}_k}( \wh{\ad}_{\wh{v}}\wh{H}_k )    &=&     \wh{\nabla}_{(H_k,0)} (\ad_vH_k , -\omega(v,H_k))  \\
&=& \Big(  {\nabla}_{H_k} \ad_vH_k   + \frac{1}{2}  \omega(v,H_k) TH_k ~ ,  \frac{1}{2} \omega( H_k , \ad_vH_k) \Big)
\end{eqnarray*}
The last two equations  imply that 
\begin{eqnarray*}
&&  \wh{\nabla}_{\wh{\ad}_{\wh{v}}\wh{H}_k} \wh{H}_k   +  \wh{\nabla}_{\wh{H}_k}( \wh{\ad}_{\wh{v}}\wh{H}_k ) \\
&=&     \Big(  {\nabla}_{\ad_vH_k } H_k + \frac{1}{2} \omega(v,H_k) TH_k ~ ,  \frac{1}{2}\omega( \ad_vH_k,H_k) \Big)\\
&& +  \Big(  {\nabla}_{H_k} \ad_vH_k   + \frac{1}{2}  \omega(v,H_k) TH_k ~ ,  \frac{1}{2} \omega( H_k , \ad_vH_k) \Big)\\
&=&     \Big(  {\nabla}_{\ad_vH_k } H_k +{\nabla}_{H_k} \ad_vH_k  +  \omega(v,H_k) TH_k ~ , ~ 0 \Big).
\end{eqnarray*}
As a result
\begin{eqnarray}\label{Eq Gen Der Var2}
&&\frac{d}{d\eps}|_{\eps=0}  T_{\g_\eps(t)}R_{\g_\eps(t)^{-1}} \frac{D_t^{\widehat{\nabla}}}{dt} \g_{\eps}(t)\\
&=&       \frac{1}{2}  \sum_{k}  \Big(  {\nabla}_{\ad_vH_k } H_k +{\nabla}_{H_k} \ad_vH_k  +  \omega(v,H_k) TH_k ~ , ~ 0 \Big) 
+ \wh{\ad}_{\wh{v}(t)}  \wh{u}(t)      +  \dot{\hat{v}}(t)\nonumber.
\end{eqnarray}
\subsection{ Viscous QGS on $\T^2$ with linear damping}
%
%
%
%
%
%
%
%
Suppose that $u=\nabla^\perp \psi_u$ where $\psi_u:\T^2\longrightarrow \R$, $\alpha=\beta\theta_2$, $d\theta:=\theta_1\wedge \theta_2$ and $\int_{\T^2}d\theta=N$. We recall that the (linear) operator $T:\cg\longrightarrow \cg$ is given by $\omega_\alpha (u,v)=\ll Tu,v\gg_\cg$ (see equation \eqref{Operator T}).
\begin{la}\label{lemma sigma}
For any $k\in\mathbb{Z}^2$, the followings hold true.\\
\textbf{i.} $TA_k =  \beta k_1\lambda(|k|)^2N B_k$, \\
\textbf{ii.} $TB_k=-\beta k_1 \lambda(|k|)^2N A_k$, \\
\textbf{iii.} $\omega(u,A_k) = -\beta k_1\lambda(|k|)  (\psi_u,\cos k.\theta)_{L^2}$, \\
\textbf{iv.} $\omega(u,B_k) = -\beta k_1 \lambda(|k|)(\psi_u,\sin k.\theta)_{L^2}$,\\
\textbf{v.} $\sum_k\omega(u,A_k)TA_k+\omega(u,B_k)TB_k=-\beta^2N u :=-2\sigma u.$
\end{la}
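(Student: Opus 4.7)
The plan is to handle the five parts in the stated order, since (v) depends on (i)--(iv). The tools used throughout are the explicit stream functions $\psi_{A_k} = -\lambda(|k|)\sin(k.\theta)$ and $\psi_{B_k} = \lambda(|k|)\cos(k.\theta)$, read off from the given representations $A_k = -\lambda(|k|)\nabla^\perp\sin(k.\theta)$ and $B_k = \lambda(|k|)\nabla^\perp\cos(k.\theta)$, together with the identity $\alpha(v) = \beta v_2$ coming from $\alpha = \beta\theta_2$.

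Parts (iii) and (iv) are immediate substitutions into $\omega_\alpha(u, v) = \int_{\T^2}\psi_u\,\alpha(v)\,d\vol$: reading off the explicit second components $(A_k)_2 = -k_1\lambda(|k|)\cos(k.\theta)$ and $(B_k)_2 = -k_1\lambda(|k|)\sin(k.\theta)$ produces the claimed $L^2$-pairings on the spot. For (i) and (ii), the approach is to use the defining identity $\omega_\alpha(X, v) = \ll TX, v\gg_\cg$ of \eqref{Operator T} tested against a generic divergence-free field $v = \nabla^\perp\psi_v$. Substituting $\psi_{A_k}$ and $v_2 = \partial_1\psi_v$ in the integrand and integrating by parts once in $\theta_1$ reduces $\omega_\alpha(A_k, v)$ to a scalar multiple of $(\psi_v, \cos(k.\theta))_{L^2}$; comparing this with a parallel reduction of $\ll B_k, v\gg_\cg$ via an analogous integration by parts pins down the coefficient of $B_k$ as $\beta k_1 \lambda(|k|)^2 N$. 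The case (ii) is symmetric, with the extra sign coming from the minus in $A_k = -\lambda(|k|)\nabla^\perp\sin(k.\theta)$ versus the plus in $B_k = \lambda(|k|)\nabla^\perp\cos(k.\theta)$.

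For (v), I would substitute (i)--(iv) into the sum, factor out the common $-\beta^2 N$, and use the $\nabla^\perp$-representations of $A_k, B_k$ to pull the operator $\nabla^\perp$ outside the series. What remains inside is essentially the Fourier expansion of $\psi_u$ in the orthogonal system $\{\cos(k.\theta), \sin(k.\theta)\}$ of $L^2(\T^2)$, so Parseval's identity collapses it to $\psi_u$ up to the appropriate normalising constant, and the total becomes $-\beta^2 N \nabla^\perp \psi_u = -\beta^2 N u$, which is then rewritten as $-2\sigma u$ by definition.

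The main obstacle is step (v), where one must carefully track the $\pm k$ redundancy of the index set (since $A_{-k} = -A_k$ and $B_{-k} = B_k$), the $N/2$ factors arising from the Fourier-basis normalisation, and the $\lambda$-power cancellations that make the final scalar come out to exactly $\beta^2 N$ independently of the particular decay rate of $\lambda(|k|)$. Parts (i)--(iv) are essentially single integrations by parts and amount to routine bookkeeping.
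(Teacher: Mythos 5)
Your proposal is correct and follows essentially the same route as the paper: parts (iii)--(iv) by direct substitution into the Roger cocycle, parts (i)--(ii) by pairing $TA_k,TB_k$ against divergence-free test fields through the defining identity $\omega_\alpha(X,\cdot)=\ll TX,\cdot\gg_\cg$, and part (v) by a completeness/Parseval collapse of the resulting series. The only cosmetic difference is in (v), where the paper rescales $\{A_k,B_k\}$ to an orthonormal family $\{\bar A_k,\bar B_k\}$ of $\cg$ and expands $u$ there, whereas you pull $\nabla^\perp$ outside the sum and apply Parseval to $\psi_u$ in $L^2(\T^2)$ --- the two are equivalent and both hinge on exactly the normalisation bookkeeping for $\lambda(|k|)$ and the $\pm k$ redundancy that you correctly identify as the delicate point.
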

The proof is moved to the section Appendix.\\[2.3mm]
%
%
%
%
%
%
Let $\wh{u}=(u,a)\in\wh{\cg}$ where  $u\in C^1([0,\tau],\cg)$ and $a\in\R$ is a constant. 
\begin{thm}\label{thm vis and fric}
The process \eqref{hatG v2 valued semimartingale torus} is a critical point of the functional     
$J^{\widehat{\nabla}}$ if and only if $\wh{u}$ satisfies the following equation for some function  $p$ (know as the pressure).
\begin{equation*}
\left\{ \begin{array}{ll}  \partial_tu  +  (u.\nabla)u  -  aTu   - {\nu}\Delta u + \sigma u    =  \mathrm{grad} p   \\
\mathrm{div} u=0 
\end{array}\right.
\end{equation*}
\end{thm}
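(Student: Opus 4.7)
The plan is to specialize the general stochastic Euler--Poincar\'e reduction theorem of Section~\ref{Sec stoch EPR} to $G=\cD_{\vol}^s(\T^2)$ with its Roger central extension $\wh{G}$, taking the noise directions to be the Kolmogorov-type fields $\wh{A}_k=(A_k,0)$ and $\wh{B}_k=(B_k,0)$ appearing in \eqref{hatG v2 valued semimartingale torus}. First I would verify the applicability hypothesis $\wh\nabla_{\wh H}\wh H=0$: the standing computation after \eqref{Laplace approx} shows that $\nabla_{A_k}A_k=0$ and $\nabla_{B_k}B_k=0$ on $\cD_{\vol}(\T^2)$, while $\omega_\alpha(A_k,A_k)=\omega_\alpha(B_k,B_k)=0$ by antisymmetry, so the covariant derivative on $\wh\cg$ indeed vanishes. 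The reduction theorem then gives the biconditional characterization: the process is critical for $J^{\wh\nabla}$ iff $(u,a)$ satisfies
\begin{equation*}
\partial_t u = -\ad^*_u u + aTu + K(u) + \tfrac12\sum_k\bigl[\omega_\alpha(u,A_k)TA_k + \omega_\alpha(u,B_k)TB_k\bigr], \qquad \partial_t a =0.
\end{equation*}

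Next I would identify each term with the corresponding term of the viscous QGS equation. The Roger correction is handled by Lemma~\ref{lemma sigma}(v): the entire sum collapses to $-2\sigma u$ with $\sigma=\tfrac12\beta^2 N$, so after the prefactor $\tfrac12$ it contributes exactly $-\sigma u$, already divergence-free. For the contraction operator $K(u)=\tfrac12\sum_k(\nabla_{A_k}\nabla_{A_k}u + \nabla_{B_k}\nabla_{B_k}u + R(u,A_k)A_k + R(u,B_k)B_k)$, I would invoke the Laplace-type approximation \eqref{Laplace approx}: after Leray projection onto divergence-free fields, the second-order piece reads $\nu\Delta u$ modulo a gradient, and the curvature pieces — which on $\cD_{\vol}(\T^2)$ with the $L^2$ metric are symmetric and involve only gradient-type residues — are absorbed into a pressure $\mathrm{grad}\,p_1$. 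This is the same identification used in \cite{A-C-C14,Cip-Cru}, imported verbatim since the stochastic setting on the $\cg$-factor is identical.

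For the coadjoint term, the standard Arnold formula on $\cD_{\vol}^s(\T^2)$ with the $L^2$ metric gives $\ad^*_u u = P((u\cdot\nabla)u) = (u\cdot\nabla)u - \mathrm{grad}\,p_2$, where $P$ is the Leray projector. Substituting these three identifications into the reduction equation, introducing $p:=p_1+p_2$ (with signs chosen appropriately), and rearranging yields
\begin{equation*}
\partial_t u + (u\cdot\nabla)u - aTu - \nu\Delta u + \sigma u = \mathrm{grad}\,p, \qquad \mathrm{div}\,u=0,
\end{equation*}
while the second reduction equation $\partial_t a=0$ just reconfirms that $a$ is the constant rotation parameter built into the ansatz. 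The biconditional is inherited directly from the reduction theorem.

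The main obstacle is the contraction step: one must be careful that the sum over the infinite Kolmogorov basis $\{A_k,B_k\}$ converges in the appropriate $H^s$ sense (governed by the decay rate $\lambda(|k|)=|k|^{-r}$ with $r\ge 3$) and that the curvature contributions $R(u,A_k)A_k$ and $R(u,B_k)B_k$ genuinely produce only gradient residues on $\T^2$. As indicated in the excerpt after \eqref{Laplace approx}, one may truncate to $|k|\le m$ and pass to the limit; the symmetry of the basis under $k\leftrightarrow -k$ and the standard computation of the sectional curvature of $\cD_{\vol}^s(\T^2)$ against plane-wave vector fields are what make the nongradient part of $K(u)$ equal to $\nu\Delta u$. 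The remaining pieces (Coriolis $aTu$, Rayleigh $\sigma u$, and the transport term) are then purely algebraic.
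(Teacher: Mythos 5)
Your proposal is correct in substance and rests on exactly the same two computational pillars as the paper -- the Laplacian identity \eqref{Laplace approx} for the second--order part and Lemma~\ref{lemma sigma}(v) for the collapse of $\tfrac12\sum_k\omega(u,A_k)TA_k+\omega(u,B_k)TB_k$ to $-\sigma u$ -- together with the standard identification $\ad^*_uu=(u\cdot\nabla)u-\mathrm{grad}\,p$. The organization, however, differs from the paper's. You derive the drift equation by citing the general reduction theorem of Section~\ref{Sec stoch EPR} (equation \eqref{modified Euler Arnold}) and then evaluate $K(u)$ and the cocycle correction directly on $u$. The paper instead re-runs the variational argument for this specific process: it computes $\frac{d}{d\eps}|_{\eps=0}$ of the reduced generalized derivative as in \eqref{Eq Gen Der Var2}, uses flatness of $\T^2$ to reduce $\nabla_{\ad_vA_k}A_k+\nabla_{A_k}\ad_vA_k$ to $\nabla_{A_k}\nabla_{A_k}v$, obtains $(\nu\Delta v-\sigma v,0)$ acting on the \emph{variation} $v$, and only then transfers these operators onto $u$ by integration by parts and self-adjointness of $\Delta$. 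What the paper's route buys is that it never has to invoke the Section~\ref{Sec stoch EPR} theorem outside its stated scope: that theorem is formulated for finitely many noise directions $\{H_i\}_{1\le i\le m}$ on a (finite-dimensional) group, whereas \eqref{hatG v2 valued semimartingale torus} uses the infinite family $\{A_k,B_k\}_{k}$ on $\cD^s_{\vol}(\T^2)$. This is the one real gap in your plan if taken literally; you do flag the convergence issue and the truncation to $|k|\le m$ mentioned after \eqref{Laplace approx}, which is the same remedy the paper implicitly relies on, but a complete write-up along your lines should either justify the passage to infinitely many directions in the abstract theorem or fall back on the direct variational computation as the paper does. A second, smaller point: you absorb the curvature terms $R(u,A_k)A_k$ into the pressure, while the paper's formulation makes them disappear at the level of the variation via the flatness of $\T^2$; the two bookkeepings agree, but the paper's is the cleaner justification of why only $\nu\Delta$ survives.
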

%
%
\begin{proof}
For the specified   semi-martingale \eqref{hatG v2 valued semimartingale torus} using equation \eqref{Eq Gen Der Var} we get
\begin{eqnarray*}
T_{\g_\eps(t)}R_{\g_\eps(t)^{-1}} \frac{D_t^{\widehat{\nabla}}}{dt} \g_{\eps}(t) &=&               \frac{1}{2}  \sum_{k} \big( \wh\nabla_{{\wh A}_k^\eps(t)} {\wh A}_k^\eps(t)   +   \wh\nabla_{{\wh B}_k^\eps(t)} {\wh B}_k^\eps(t)  \big)  \\
&&   +   \wh{\Ad}_{e(\eps,t)} \wh{u}(t)      + \eps \big( \dot{v}(t)  ,\dot{b}(t) \big)
\end{eqnarray*}
where 
\[
{\wh A}_k^\eps(t)  :=   \wh{\Ad}_{e(\eps,t)}\wh{A_k} =\wh{\Ad}_{e(\eps,t)}(A_k,0)
\]
and similarly 
\[
{\wh B}_k^\eps(t)  :=  \wh{\Ad}_{e(\eps,t)}\wh{B_k}  =\wh{\Ad}_{e(\eps,t)}(B_k,0).
\]
Note that 
\[
\frac{d}{d\eps}|_{\eps=0} {\wh A}_k^\eps(t)  =    \wh{\ad}_{\wh{v}(t)}\wh{A_k} =  \big(\ad_{v(t)}A_k ~,-\omega(v(t),A_k) \big)
\]
and  $\frac{d}{d\eps}|_{\eps=0} {\wh B}_k^\eps(t)  =     \big(\ad_{v(t)}B_k ~,-\omega(v(t),B_k) \big)$.
As a result, equation \eqref{Eq Gen Der Var2}  implies that 
\begin{eqnarray*}
&&\frac{d}{d\eps}|_{\eps=0} T_{\g_\eps(t)}R_{\g_\eps(t)^{-1}} \frac{D_t^{\widehat{\nabla}}}{dt} \g_{\eps}(t) \\
&=&    \frac{1}{2}   \sum_{k} \Bigg(  \Big(  {\nabla}_{\ad_vA_k } A_k +{\nabla}_{A_k} \ad_vA_k  +  \omega(v,A_k) TA_k ~ , ~ 0 \Big)\\
&&  +   \Big(  {\nabla}_{\ad_vB_k } B_k +{\nabla}_{B_k} \ad_vB_k  +  \omega(v,B_k) TB_k ~ , ~ 0 \Big)   \Bigg)\\
&&+ \wh{\ad}_{\wh{v}(t)}  \wh{u}(t)      +  \dot{\hat{v}}(t).
\end{eqnarray*}%
Since the Riemann curvature tensor on $\T^2$ vanishes, then
\[
{\nabla}_{\ad_vA_k } A_k +{\nabla}_{A_k} \ad_vA_k = {\nabla}_{A_k } {\nabla}_{A_k } v
\]
and 
\[
 {\nabla}_{\ad_vB_k } B_k +{\nabla}_{B_k} \ad_vB_k  =  {\nabla}_{B_k} {\nabla}_{B_k} v.
\]
Accordingly, we obtain
\begin{eqnarray*}
&&\frac{d}{d\eps}|_{\eps=0} T_{\g_\eps(t)}R_{\g_\eps(t)^{-1}} \frac{D_t^{\widehat{\nabla}}}{dt} \g_{\eps}(t) \\
&=&    \frac{1}{2}   \sum_{k}   \Big(  {\nabla}_{A_k } {\nabla}_{A_k } v  +  {\nabla}_{B_k} {\nabla}_{B_k} v ~ , ~ 0 \Big)\\
&&    \frac{1}{2}   \sum_{k}   \Big(   \omega(v,A_k) TA_k  +  \omega(v,B_k) TB_k  ~ , ~ 0 \Big)   \\
&&  + \wh{\ad}_{\wh{v}(t)}  \wh{u}(t)      +  \dot{\hat{v}}(t)\\
&\stackrel{*}{=}&             \Big( \nu\Delta v - \sigma v ~,~ 0  \Big)   
+   \wh{\ad}_{\wh{v}} \wh{u}(t)      +  \dot{\hat{v}}(t) 
\end{eqnarray*}
where in * we used  equation \eqref{Laplace approx} is used and lemma \ref{lemma sigma}.\\
We recall that $\ll~,~\gg_{\wh\cg}$, $\ll~,~\gg_{\cg}$ and $\langle~,~\rangle $ are the metrics on $\wh\cg$, $\cg$ and $\T^2$ respectively. Moreover, by assumption, $\wh{v}(0) = \wh{v}(\tau) = 0$. Consequently, by applying integration by parts and utilizing the fact that $\Delta$ is self-adjoint, we obtain
\begin{eqnarray*}
&&\frac{d}{d\eps}|_{\eps=0} J^{\widehat{\nabla}}(  \g_{\eps}(t)  )\\
&=&   \E  \int_{0}^{\tau} \ll 
  \dot{\wh{v}} (t)   + \wh{\ad}_{\wh{v}(t)}\wh{u}(t) + (  \nu\Delta v(t) - \sigma v(t)   ,0)  ~,~ \wh{u}(t) \gg_{\wh\cg}   dt\\
&=&    \int_{0}^{\tau}   \Big(   \ll  
  \dot{\wh{v}} (t)  , \wh{u}(t) \gg_{ \wh\cg}   +   \ll \wh{\ad}_{\wh{v}(t)}\wh{u}(t)  , \wh{u}(t) \gg_{ \wh\cg}\\
&&  +  \ll (  \nu\Delta v(t) -\sigma v(t)   ,0)  ~,~ \wh{u}(t) \gg_{\wh\cg}   \Big)dt
\end{eqnarray*}
\begin{eqnarray*}
&=&  \int_0^\tau  \ll - \dot{\wh{u}} (t)   - \wh{\ad}^*_{\wh{u}(t)}\wh{u}(t) + ( \nu\Delta u(t) - \sigma u(t) ,0)  ~,~ \wh{v}(t) \gg_{\wh{\cg}}  dt\\
 %
%
%
&=&  - \int_0^\tau  \int_{\T^2}  \Big\langle  
   \dot{u} (t)    +   {\ad}^*_{{u}(t)}{u}(t) - a T(u(t)) -  \nu\Delta u(t) +\sigma u(t)   ~,~ {v}(t)  \Big\rangle    d\theta dt.
%
%
\end{eqnarray*}
Since the above argument holds for any   $\wh{v}\in\wh\cg$ we have
\begin{equation*}
\left\{ \begin{array}{ll}  \partial_tu   +   {\ad}^*_{{u}(t)}{u}(t) - aT(u(t)) - \nu\Delta u +\sigma u   =  0   \\
\mathrm{div}(u)=0 
\end{array}\right.
\end{equation*}
Due to the fact that $u$ is divergence free and 
\[
{\ad}^*_{{u}(t)}{u}(t)=P_e(\nabla_uu)=\nabla_uu-\mathrm{grad}p=(u.\nabla)u  - \mathrm{grad}p
\]
the previous equations are equivalent to \eqref{Viscouse rot Euler}. We remind that the projection map $P_e:\cX(\T^2)\longrightarrow\mathrm{div}^{-1}(0)$ is induced by the Hodge decomposition and sends any vector field to its divergence free part \cite{Marsden-Ebin-Fischer}.
\end{proof}
%
%
%
%
%
\subsection{ Evolution  equation according to the stream functions }\label{sec QGS Stream fv}
The vorticity form of equation \eqref{Viscouse rot Euler} is derived in the following way.
First we write equation \eqref{Viscouse rot Euler} in terms of forms i.e. we apply the musical isomorphism $\flat$ induced by the Riemannian metric $\langle ., .\rangle$ on $\T^2$  
\begin{equation}\label{Viscous rot Euler v2}
  \partial_tu^\flat   +   (\nabla_uu)^\flat  + a (Tu)^\flat - \nu\Delta u^\flat + \sigma u^\flat   =  dp   \\
\end{equation}
Now apply the exterior derivative $d$ on both sides. Since  $d$ commutes with $\partial_t$ and the Laplacian $\Delta$ then
\begin{equation}\label{Viscous rot Euler v3}
  \partial_td u^\flat   +  a(\nabla_uu)^\flat   + a d(Tu)^\flat - \nu\Delta du^\flat +\sigma du^\flat   =  ddp=0 .
\end{equation}
The $2$-form $du^\flat$  on the 2 dimensional manifold $\T^2$  is referred to as vorticity.  Since that space of 2-forms on $\T^2$ is 1-dimensional, there exists a function $\phi:\T^2\longrightarrow \R$  such that $du^\flat=\phi d\theta$ where $d\theta=\theta_1\wedge\theta_2$. If $u=\nabla^\perp \psi$ then $\phi=\Delta\psi$. More precisely
\begin{eqnarray*}
du^\flat  =  d(- \partial_2\psi \theta_1 + \partial_1\psi  \theta_2) =   ( \partial_1^2\psi +  \partial_2^2\psi)  \theta_1\wedge  \theta_2  =  (\Delta \psi)  d\theta
\end{eqnarray*}
Moreover, according to theorem 1.17 chapter IV of \cite{Arnold-Khesin}, we have 
\begin{eqnarray*}
&& (\nabla_u u)^\flat = \cL_uu^\flat - \frac{1}{2} d\langle u, u\rangle
\end{eqnarray*}
where $\cL$ denotes the Lie derivative. As a result 
\begin{eqnarray*}
&& d(\nabla_u u)^\flat = d\cL_uu^\flat - \frac{1}{2} dd\langle u, u\rangle =d\cL_uu^\flat = \cL_u du^\flat =\cL_u (\Delta \psi d\theta).
\end{eqnarray*}
On the other hand
\begin{eqnarray*}
\cL_u \Delta \psi d\theta   &=&  d i_u (\Delta\psi d\theta) + i_u .d (\Delta\psi d\theta)  = d i_u (\Delta\psi d\theta)\\
&=&  d( \Delta \psi i_u  d\theta)  =  d( \Delta \psi  (-u_2 \theta_1 +  u_1  \theta_2)) \\
&=&    d( \Delta \psi  (-\partial_1\psi  \theta_1 - \partial_2\psi   \theta_2)) \\
&=&    d(   -\Delta \psi\partial_1\psi  \theta_1 - \Delta \psi\partial_2\psi   \theta_2   ) \\
&=&       -\partial_2(\Delta \psi\partial_1\psi) \theta_2\wedge  \theta_1 - \partial_1(\Delta \psi\partial_2\psi) \theta_1\wedge   \theta_2    \\
&=&     (  \partial_1\psi \partial_2\Delta \psi   - \partial_1 \Delta \psi  \partial_2\psi   ) \theta_1\wedge   \theta_2    \\
&=&   \{\psi,\Delta\psi\}   \theta_1\wedge   \theta_2    = \{\psi,\Delta\psi\} d \theta 
\end{eqnarray*}
where $u=(u_1,u_2)=(-\partial_2\psi,\partial_1\psi) $. Moreover for any $\psi,\xi\in C^\infty(\T^2)$,   $\{\psi,\xi\}=\partial_1\psi \partial_2\xi  -  \partial_2\psi \partial_1\xi$ is the Poisson bracket induced by the symplectic form $d\theta$ on $\T^2$.\\
Finally, we note that 
\begin{eqnarray*}
d(Tu)^\flat  = \beta \partial_1\psi  d\theta
\end{eqnarray*}
More precisely according to corollary 1 from \cite{Viz2}, $(Tu)  = P_e(\psi\alpha^\sharp)$ 
where $\sharp$ is the inverse of the musical isomorphism $\flat$ and $\alpha=\beta d\theta_2$. This last means that
\begin{eqnarray*}
d(Tu)^\flat  = d(\beta \psi  \theta_2)=\beta \partial_1\psi  \theta_1\wedge  \theta_2.
\end{eqnarray*}
As a result, for $a=1$,  equation \eqref{Viscous rot Euler v3} can be written as
\begin{eqnarray*}
\Big(  \partial_t\Delta\psi+  \{\psi , \Delta\psi\}  +  \beta \partial_1\psi - \nu\Delta^2 \psi  + \sigma \Delta\psi \Big) d\theta  =0
\end{eqnarray*}
or equivalently
\begin{equation*}
\partial_t\Delta\psi+  \{\psi, \Delta\psi \}   +  \beta\partial_1\psi   - \nu\Delta^2 \psi  + \sigma \Delta\psi=0
\end{equation*}
In the case of quasi-geostrophic equations on the sphere, the term $\nu\Delta^2\psi$ represents the turbulent viscosity and $\sigma \Delta\psi$ is Rayleigh friction (\cite{Skiba}, chapter 3 or \cite{Dymnikov} chapter 3).   
%
%
%
\subsection{Viscous QGS on $\T^2$ without Rayleigh friction}\label{sec new example sigma=0}
In this part, we propose a $\wh{\cg}$-valued diffusion process  using two vector fields which span the tangent space  of the torus at any point (not the Lie algebra $\cg$). We explore how this  semi-martingale can describe viscous QGS on $\T^2$ without Rayleigh friction via stochastic Euler-Poincar\'e reduction for central extension.

For any point $p\in \T^2$ set $H_1(p):=(\sqrt{2\nu},0)$ and $H_2(p)=(0,\sqrt{2\nu})$ where $\nu>0$ is a constant. Then $H_1,H_2$ belong to $\cg:=T_e\cD_{\vol}(\T^2)$. Consider the diffusion process 
\begin{eqnarray}\label{hatg valued semimartingale 1,2}
\left\{ \begin{array}{ll}  
 d\eta(t) =      T_e\wh{R}_{\eta(t)}\Big(  \sum_{i=1}^2  \big( {\wh{H}_i}\circ dW^{i}_t \big)   + {\wh{u}}(t)dt  \Big)     \\
  \eta(0) = id_{\wh{G}} 
\end{array}\right.
\end{eqnarray}
where $\wh{H}_i=(H_i,0)$, $i=1,2$ and 
\[
\wh{u}(t):[0,\tau]\longrightarrow\wh{\cg} \quad ; \quad t\longmapsto (u(t),a)
\]
is a $C^1$ map. The same argument as in Proposition \ref{prop gen derivative} shows that the process \eqref{hatg valued semimartingale 1,2} has a solution and its generalized derivative is $\wh{u}$.
Note that by assumption 
\[
\omega_\alpha(H_1,X_f) = \omega_\alpha(H_2,X_f) =\omega_\alpha(H_1,H_2) =0
\]
for any $X_f\in \cg$ (see remark \ref{remark kirillov}). As in section \ref{Section variation}, for  $\wh{v}(t)=(v(t),b(t))\in C^1([0,\tau],\wh{\cg})$ with $\wh{v}(0)= \wh{v}(\tau)=0 $ we set
\begin{equation*}
{\eta }_\eps(t)=e(\eps,t)\circ \eta(t).
\end{equation*}
where $\eta$ is a solution of \eqref{hatg valued semimartingale 1,2} and    $e(\eps,t)$ is given by \eqref{variation in hat G}. Then repeating the computations of section section \ref{Section variation}
for $\eta_\eps(t)$ we get
\begin{eqnarray}\label{Eq Gen Der Var 1,2}
&&\frac{d}{d\eps}|_{\eps=0}  T_{\eta_\eps(t)}R_{\eta_\eps(t)^{-1}} \frac{D_t^{\widehat{\nabla}}}{dt} \eta_{\eps}(t)\\
&=&       \frac{1}{2}  \sum_{i=1}^2  \Big(  {\nabla}_{\ad_vH_i } H_i +{\nabla}_{H_i} \ad_vH_i  ~ , ~ 0 \Big) 
+ \wh{\ad}_{\wh{v}(t)}  \wh{u}(t)      +  \dot{\hat{v}}(t)\nonumber\\
&=& \frac{1}{2}   \sum_{i=1}^2   \Big(  {\nabla}_{H_i }{\nabla}_{H_i } v  ~ , ~ 0 \Big)
+ \wh{\ad}_{\wh{v}(t)}  \wh{u}(t)      +  \dot{\hat{v}}(t)\nonumber\\
&=&             \Big( \nu\Delta v  ~,~ 0  \Big)   
+   \wh{\ad}_{\wh{v}} \wh{u}(t)      +  \dot{\hat{v}}(t) 
\end{eqnarray}
In the following theorem, we demonstrate that replacing the process \eqref{hatG v2 valued semimartingale torus} with \eqref{hatg valued semimartingale 1,2} results in the disappearance of the Rayleigh friction (linear damping) term from the evolution equation.
%
%
%
%
%
\begin{thm}
The process \eqref{hatg valued semimartingale 1,2} is a critical point of the functional     
$J^{\widehat{\nabla}}$ if and only if ${u}$ satisfies the following  equations.
\begin{equation*}
\left\{ \begin{array}{ll}  \partial_tu  +  (u.\nabla)u  -  aTu   - {\nu}\Delta u     =  \mathrm{grad} p   \\
\mathrm{div} u=0 
\end{array}\right.
\end{equation*}
\end{thm}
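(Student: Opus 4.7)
The plan is to mimic the proof of Theorem \ref{thm vis and fric} verbatim, but to exploit the vanishing $\omega_\alpha(H_i,\,\cdot\,)=0$ from Remark \ref{remark kirillov} to kill the term that produced Rayleigh friction. The starting point is already handed to us: equation \eqref{Eq Gen Der Var 1,2} gives
\[
\frac{d}{d\eps}\Big|_{\eps=0} T_{\eta_\eps(t)}R_{\eta_\eps(t)^{-1}} \frac{D_t^{\widehat{\nabla}}}{dt}\eta_{\eps}(t) = \bigl(\nu\Delta v,0\bigr) + \wh{\ad}_{\wh{v}(t)}\wh{u}(t) + \dot{\hat{v}}(t),
\]
so crucially the sum $\sum_i \omega(v,H_i)TH_i$ that was present in the derivation for \eqref{hatG v2 valued semimartingale torus} is absent here — precisely because $\omega_\alpha(H_1,\cdot)=\omega_\alpha(H_2,\cdot)=0$ by the extension of the Roger cocycle to $\cg=T_e\cD^s_{\vol}(\T^2)$. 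This is the structural reason the linear damping $\sigma u$ disappears.

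Next I would differentiate $J^{\widehat{\nabla}}(\eta_\eps)$ in $\eps$ under the expectation, just as in the proof of Theorem \ref{thm vis and fric}. Pairing the above expression against $\wh{u}(t)=(u(t),a)$ in $\ll\cdot,\cdot\gg_{\wh\cg}$ and integrating from $0$ to $\tau$, the boundary conditions $\wh{v}(0)=\wh{v}(\tau)=0$ let me integrate by parts in $t$, moving $\dot{\hat v}$ off $\wh{v}$ and turning it into $-\dot{\hat{u}}$ paired against $\wh{v}$. The term $\ll\wh{\ad}_{\wh v}\wh u,\wh u\gg_{\wh\cg}$ transposes to $\ll\wh{\ad}^*_{\wh u}\wh u,\wh v\gg_{\wh\cg}$, and self-adjointness of $\Delta$ on $\T^2$ moves the viscosity term from $v$ onto $u$. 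Using the explicit formula for $\wh{\ad}^*$ computed in Section \ref{Sec Cent ext}, namely $\wh{\ad}^*_{(u,a)}(u,a)=(\ad^*_u u - aTu,\,0)$, the $\wh\cg$-pairing reduces to a $\cg$-pairing of
\[
\dot u(t) + \ad^*_{u(t)}u(t) - aTu(t) - \nu\Delta u(t)
\]
against $v(t)$, while the $\R$-component simply yields $\dot a=0$, i.e.\ $a$ is constant (consistent with our choice).

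Since $\wh v$ is arbitrary in $C^1([0,\tau],\wh\cg)$ with vanishing endpoints, the critical point condition $\frac{d}{d\eps}\big|_{\eps=0}J^{\widehat\nabla}(\eta_\eps)=0$ forces
\[
\partial_t u + \ad^*_u u - aTu - \nu\Delta u = 0,\qquad \mathrm{div}\,u=0.
\]
Finally, invoking the Hodge decomposition on $\T^2$, the identity $\ad^*_u u = P_e(\nabla_u u) = (u\cdot\nabla)u - \mathrm{grad}\,p$ rewrites this as the claimed viscous QGS equation with $\sigma=0$. The converse direction (that a solution of the PDE produces a critical point) follows by reading the same chain of equalities backwards.

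I do not expect a genuine obstacle: the substantive analytic input (existence of the solution to \eqref{hatg valued semimartingale 1,2} and identification of its generalized derivative) has already been granted by the remark following the definition, and the variational formula \eqref{Eq Gen Der Var 1,2} has been pre-computed. The only point that requires care is verifying that the curvature-free cancellation ${\nabla}_{\ad_vH_i} H_i + {\nabla}_{H_i}\ad_vH_i = {\nabla}_{H_i}{\nabla}_{H_i}v$ still applies with the constant vector fields $H_1,H_2$ (which it does on $\T^2$), and noting that $\sum_{i=1}^2 \nabla_{H_i}\nabla_{H_i}v=2\nu\Delta v$ by direct computation — this is what replaces the role that the Kolmogorov basis and \eqref{Laplace approx} played in Theorem \ref{thm vis and fric}.
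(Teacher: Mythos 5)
Your proposal is correct and follows essentially the same route as the paper's own proof: it starts from the pre-computed variational formula \eqref{Eq Gen Der Var 1,2} (where the vanishing of $\omega_\alpha(H_i,\cdot)$ has already removed the damping term), pairs against $\wh{u}$, integrates by parts using $\wh{v}(0)=\wh{v}(\tau)=0$ and the self-adjointness of $\Delta$, transposes $\wh{\ad}$ to $\wh{\ad}^*=(\ad^*_u u - aTu,0)$, and concludes by arbitrariness of $\wh{v}$ together with the Hodge-decomposition identity $\ad^*_u u=(u\cdot\nabla)u-\mathrm{grad}\,p$. Your side remarks (the flatness of $\T^2$ justifying $\nabla_{\ad_vH_i}H_i+\nabla_{H_i}\ad_vH_i=\nabla_{H_i}\nabla_{H_i}v$, and $\tfrac12\sum_{i}\nabla_{H_i}\nabla_{H_i}v=\nu\Delta v$ for the constant fields $H_1,H_2$) match the paper's computation exactly.
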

%
%
\begin{proof}
Following the proof of Theorem \ref{thm vis and fric} with the semi-martingale \eqref{hatg valued semimartingale 1,2} and the variation $\eta_\eps(t)$, we obtain
\begin{eqnarray*}
&&\frac{d}{d\eps}|_{\eps=0} J^{\widehat{\nabla}}(  \eta_{\eps}(t)  ) 
=
\E\Big[ \int_0^\tau  \ll \frac{d}{d\eps}|_{\eps=0} T_{\eta_\eps(t)}R_{\eta_\eps(t)^{-1} }  \frac{D_t^{\widehat{\nabla}}{ {  \eta_\eps  }(t)}}{dt}   ~,~ \wh{u}(t)   \gg_{\wh\cg}      dt\Big]\\
&=&   \E  \int_{0}^{\tau} \ll 
  \dot{\wh{v}} (t)   + \wh{\ad}_{\wh{v}(t)}\wh{u}(t) + (  \nu\Delta v(t)    , 0)  ~,~ \wh{u}(t) \gg_{\wh\cg}   dt \\
&=&    \int_{0}^{\tau}   \Big(   \ll  
  \dot{\wh{v}} (t)  ~,~ \wh{u}(t) +  \wh{\ad}_{\wh{v}(t)}\wh{u}(t)  \gg_{ \wh\cg}+  \ll (  \nu\Delta v(t)    , 0)  ~,~ \wh{u}(t) \gg_{\wh\cg}   \Big)dt  \\
&=&  - \int_0^\tau  \int_{\T^2}  \Big\langle  
   \dot{u} (t)    +   {\ad}^*_{{u}(t)}{u}(t) - a T(u(t)) -  \nu\Delta u(t)    ~,~ {v}(t)  \Big\rangle    d\theta dt
=0.
\end{eqnarray*}
Since in the above argument   $\wh{v}\in\wh\cg$ was arbitrary we have
\begin{equation*}
\left\{ \begin{array}{ll}  \partial_tu   +  (u.\nabla)u   - aTu - \nu\Delta u    =   \mathrm{grad}p \\
\mathrm{div}(u)=0 
\end{array}\right.
\end{equation*}
which completes the proof.
\end{proof}
Now, suppose that the velocity field $u$ is given by the stream function $\psi$. Then, using the results from section \ref{sec QGS Stream fv}, we can express the previous equation as
\begin{equation*}
\partial_t\Delta\psi+  \{\psi, \Delta\psi \}   +  \beta\partial_1\psi   - \nu\Delta^2 \psi  =0.
\end{equation*}
\textbf{Discussion.}
The two semi-martingales \eqref{hatG v2 valued semimartingale torus} and \eqref{hatg valued semimartingale 1,2} that we are considering lead to different physical phenomena due to their distinct structures.

The first semi-martingale \eqref{hatG v2 valued semimartingale torus} uses a basis for the Lie algebra $\cg$ to incorporate real-valued Brownian motions, which introduce stochastic perturbations to the system. This perturbation via the Coriolis effect manifests as viscosity and friction (linear damping) in the resulting evolution equation \eqref{Viscouse rot Euler}. Essentially, this randomness in the system induces energy dissipation, represented by the turbulent viscosity term $\nu\Delta u$ and the Rayleigh friction term $\sigma u$. In this scenario, setting $\sigma = 0$ would imply $\beta = 0$.

On the other hand, the second semi-martingale \eqref{hatg valued semimartingale 1,2} is constructed using a simpler set of stochastic terms; specifically, we use just two fixed vector fields for the fluctuation directions. This more constrained form of randomness does not generate the same kind of energy dissipation. As a result, the evolution equation derived from this semi-martingale lacks the linear damping terms. This reflects a physical situation where the system experiences stochastic perturbations,  viscosity, and the Coriolis effect but does not lose energy through linear damping.

In the absence of the central extension, both semi-martingales (after reducing to $\cg$) lead to the same evolution equation, which is the Navier-Stokes equations for incompressible fluids.
%
%
%
\section{Appendix}
\textbf{Proof of  proposition \ref{prop Roger cocycle is int}}.\\
\begin{proof}
Let  $N$ be the closed 1-dimensional submanifold of $\T^2$ which contains  the image of $\Phi(t,s_0)$. In   figure \eqref{fig:intersecting-circles},  the submanifold $N$ is denoted by a red circle with a thick line.
%
\begin{figure}[h]
    \centering
\begin{tikzpicture}[xscale=1.925]
    \draw[double distance=15mm] (0:1) arc (0:180:1);
    \draw[double distance=15mm] (180:1) arc (180:360:1);
    \draw[thick,white] (.6,0)--(1.37,0);
    \draw[thick,white] (-.6,0)--(-1.37,0);
    \draw[line width=1.3, looseness=.3, name path=mer, red] (0,-1.75cm-.2pt) to[out=0,in=0] (0,-.25cm+.2pt);
    \draw[line width=1.3, dashed, looseness=.3, red] (0,-1.75cm-.2pt) to[out=180,in=180] (0,-.25cm+.2pt);
    \draw[line width=.3, looseness=1.3, name path=lon, blue] (1.375cm+.1pt,0) to[out=-90, in=-90] (-1.375cm-.1pt,0);
    \draw[line width=.3, dashed, looseness=1.3, blue] (1.375cm+.1pt,0) to[out=90, in=90] (-1.375cm-.1pt,0);
    \fill[name intersections={of=mer and lon, by=v1}] (v1) ellipse (.2mm and .4mm);
    \node[above] at (v1) {~~~N};
\end{tikzpicture}
\caption{The submanifold $N$ for the singular cocycle $\omega_N$.}
    \label{fig:intersecting-circles}
\end{figure}
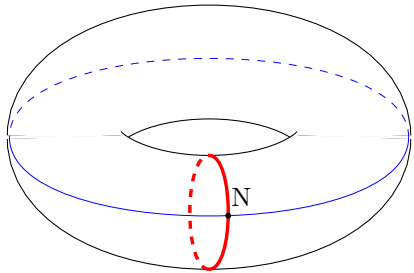

According to   \cite{Jan-Viz-2016} page 5035, $\omega_N$ and $\omega_\alpha$ are cohomologous if and only if   for any $\gamma\in\Omega^1(\T^2)$ with the property $d\gamma=0$ the equality 
\begin{equation}\label{Int-cond}
\int_N\gamma = \int_{\T^2}\alpha\wedge \gamma
\end{equation}
holds true. To make the proof  more clear, we will do this task in four steps.\vspace{4mm}\\
%
%
\noindent
\textbf{Step 1.} For $\gamma=\theta_1$ we have $\int_N\theta_1= \int_0^{2\pi}dt=2\pi$. On the other hand
\begin{eqnarray*}
\int_{\T^2}\alpha\wedge\gamma =   \frac{1}{2\pi}\int_{\T^2}\theta_1\wedge \theta_2  = \frac{1}{2\pi} \int_0^{2\pi}\int_0^{2\pi}dtds= 2\pi. 
\end{eqnarray*}
This means that \eqref{Int-cond} holds true.\vspace{4mm}\\
%
%
\textbf{Step 2.} For the constants $c_1,c_2\in\R$ and $\gamma=c_1 \theta_1 + c_2\theta_2 $ we have 
\begin{eqnarray*}
\int_{\T^2}\alpha\wedge \gamma =   \frac{c_1}{2\pi}\int_{\T^2}\theta_1\wedge \theta_2 + \frac{c_2}{2\pi}\int_{\T^2}\theta_2\wedge \theta_2   =  2c_1\pi. 
\end{eqnarray*}
On the other hand
\[
\int_N\gamma  =  c_1\int_N\theta_1 + c_2\int_N\theta_2 = 2c_1\pi + c_2\int_N\theta_2 .
\]
If we prove that $\int_N\theta_2=0$, then the assertion is  proved. 
Let 
\[
i:N\hookrightarrow \T^2\quad p\longmapsto (p,s_0)
\]
be the inclusion map which embeds $N$ into  $\T^2$. By restriction of $\theta_2 $ to $N$ basically we mean $i^*\theta_2$. Moreover, any tangent vector $v\in T_pN$ is  identified  with the tangent vector $(v,0)\in T_{(p,s_0)} \T^2$ via $i$.  As a result
\begin{eqnarray*}
i^*\theta_2(v) = \theta (d_piv)=\theta_2(v,0)_{(p,s_0)}    = 0
\end{eqnarray*}
which implies that the integral $\int_N\theta_2=0$.\vspace{4mm}\\
%
%
\textbf{Step 3.} For any $f\in C^\infty(\T^2)$    and $\gamma= f \theta_1  \in\Omega^1(\T^2)$ we have 
\begin{eqnarray*}
\int_{\T^2}\alpha\wedge \gamma =   \frac{1}{2\pi}\int_{\T^2} f \theta_1\wedge \theta_2  
\end{eqnarray*}
Note that  the condition $ d \gamma=0$ reads
\begin{eqnarray*}
d \gamma  = d( f \theta_1) = \partial_2f \theta_2\wedge\theta_1 =0
\end{eqnarray*}
which implies that $   \partial_2 f =0$. As a result, there is a  function $k=k(t)$ such that 
\[
\Phi^*f(t,s)=k(t).
\]
Using this fact we get
\[
\int_Nf\theta_1=\int_0^{2\pi} k(t) dt.
\]
On the other hand
\begin{eqnarray*}
\frac{1}{2\pi} \int_{\T^2}f\theta_1\wedge\theta_2  =  \frac{1}{2\pi} \int_0^{2\pi}\int_0^{2\pi}k(t) dsdt 
%
%
%
=   \int_0^{2\pi} k(t )dt.    
\end{eqnarray*}
%
%
%
\textbf{Step 4.} For any $f,g\in C^\infty(\T^2)$ and   $\gamma= f \theta_1 + g\theta_2 \in\Omega^1(\T^2)$ we have 
\begin{eqnarray*}
\int_{\T^2}\alpha\wedge \gamma =   \frac{1}{2\pi}\int_{\T^2} f \theta_1\wedge \theta_2 + \frac{1}{2\pi}\int_{\T^2}g\theta_2\wedge \theta_2   =  \frac{1}{2\pi}\int_{\T^2} f \theta_1\wedge \theta_2. 
\end{eqnarray*}
On the other hand since $i^*\theta_2=0$ we get
\[
\int_N\gamma  = \int_Nf\theta_1 + \int_N g\theta_2 = \int_N f\theta_1 .
\]
Then, it suffices to  prove that $\int_N f \theta_1=\frac{1}{2\pi}\int_{\T^2} f \theta_1\wedge \theta_2$.
The condition $d\gamma=0$ implies  that 
\[
df\wedge \theta_1 + dg\wedge\theta_2=(-\partial_2f+\partial_1 g)\theta_1\wedge\theta_2=0
\]
i.e. $\partial_2f=\partial_1g$. Set $h(s):=\int_0^{2\pi}\Phi^*f(t,s)dt$. We show that $h(s)$ is a constant function in $s$. Since $g $ is a periodic function with period $2\pi$,  we observe that
\begin{eqnarray*}
\partial_sh(s)  &=&  \int_0^{2\pi}\partial_s\Phi^*f(t,s)dt =\int_0^{2\pi}\Phi^*\partial_2f(t,s)dt\\
&=&  \int_0^{2\pi} \Phi^*\partial_1g(t,s)dt \\
&=&  \int_0^{2\pi}\partial_t\Phi^*g(t,s)dt \\
&=&  \Phi^*g(2\pi,s) -\Phi^*g(0,s) =0
\end{eqnarray*}
As a result $h(s)=h(s_0)$  or equivalently $\int_0^{2\pi}\Phi^*f(t,s)dt=\int_0^{2\pi}\Phi^*f(t,s_0)dt$. However,
\begin{eqnarray*}
\frac{1}{2\pi}\int_{\T^2} f \theta_1\wedge \theta_2  &=&  \frac{1}{2\pi}\int_0^{2\pi}\int_0^{2\pi} \Phi^*f(t,s)dt ds\\
&=&  \frac{1}{2\pi}\int_0^{2\pi}\int_0^{2\pi} \Phi^*f(t,s_0)ds dt\\
&=& \int_0^{2\pi} \Phi^*f(t,s_0)dt\\
&=& \int_N f\theta_1
\end{eqnarray*}
which completes the proof.
\end{proof}\noindent
%
%
%
%
\textbf{Proof of proposition \ref{prop contractio form}.}\\
\begin{proof}
Using the expression for Lie bracket on the central extension and also equation \eqref{covariant derivative}, we have
\begin{equation*}
\wh{\nabla}_{[(u,a),(X,0)]_{\hat{\cg}}}(X,0)=\Big( \nabla_{[u,X]}X -\frac{1}{2}\omega(u,X)TX ~,~ \frac{1}{2}\omega([u,X],X)  \Big).
\end{equation*}
Moreover using the fact that $\omega(X,u)=-\omega(u,X)$ and equation \eqref{second covariant derivative} we have
\begin{eqnarray*}
&& \wh{R}\Big(  (u,a) , (X,0)  \Big) (X,0) \\
& =&  \wh{\nabla}_{(u,a)}   \wh{\nabla}_{(X,0)} (X,0)  -     \wh{\nabla}_{(X,0)} \wh{\nabla}_{(u,a)} (X,0)   -  \wh{\nabla}_{[(u,a),(X,0)]_{\hat{\cg}}}(X,0)\\
& =&  \Big( \nabla_u   \nabla_XX  -\frac{1}{2}aT\nabla_XX  ~,~ \frac{1}{2}\omega(u,\nabla_XX)   \Big)  \\
&& -\Big( \nabla_X \nabla_uX -\frac{1}{2}\nabla_X(aTX) - \frac{1}{4}\omega(u,X)TX~,~  \frac{1}{2}\omega(X,\nabla_uX -\frac{1}{2}aTX )\Big)\\
&&  -  \Big(   \nabla_{[u,X]}X -\frac{1}{2}\omega(u,X)TX ~,~ \frac{1}{2}\omega([u,X],X) \Big)\\
\end{eqnarray*}
\begin{eqnarray*}
& =&  \Big( R(u,X)X  -\frac{1}{2}aT\nabla_XX     +        \frac{1}{2}\nabla_X(aTX)  + \frac{1}{4}\omega(u,X)TX 
+     \frac{1}{2}\omega(u,X)TX   \\
&& ~,~ \frac{1}{2}\omega(u,\nabla_XX)     -   \frac{1}{2}\omega(X,\nabla_uX -\frac{1}{2}aTX )    -   \frac{1}{2}\omega([u,X],X)   \Big)  \\
& =&  \Big( R(u,X)X  -\frac{1}{2}aT\nabla_XX     +        \frac{1}{2}\nabla_X(aTX)  + \frac{3}{4}\omega(u,X)TX    \\
&& ~,~ \frac{1}{2}\omega(u,\nabla_XX)     -   \frac{1}{2}\omega(X,\nabla_uX -[u,X] -\frac{1}{2}aTX )       \Big)  
\end{eqnarray*}
On the other hand
\begin{eqnarray*}
\wh{\nabla}_{(X,0)}\wh{\nabla}_{(X,0)}(u,a) = \Big(  \nabla_X\nabla_Xu   - \frac{1}{2}\nabla_X(aTX)  - \frac{1}{4}\omega(X,u)TX  ~,~   
   \frac{1}{2}\omega(X,\nabla_Xu -\frac{1}{2}aTX )\Big).
\end{eqnarray*}
As a result we get
\begin{eqnarray*}
&& 2 \widehat{K}\big( (u,a),(X,0) \big) \\
&=&    \Big(  \nabla_X\nabla_Xu   - \frac{1}{2}\nabla_X(aTX)  - \frac{1}{4}\omega(X,u)TX  ~,~   
   \frac{1}{2}\omega(X,\nabla_Xu -\frac{1}{2}aTX )\Big)\\
&&  +  \Big( R(u,X)X  -\frac{1}{2}aT\nabla_XX     +        \frac{1}{2}\nabla_X(aTX)  + \frac{3}{4}\omega(u,X)TX    \\
&& ~,~ \frac{1}{2}\omega(u,\nabla_XX)     -   \frac{1}{2}\omega(X,\nabla_uX -[u,X] -\frac{1}{2}aTX )       \Big)  \\
&=& \Big( \nabla_X\nabla_Xu  + R(u,X)X  -\frac{1}{2}aT\nabla_XX    +   \omega(u,X)TX    ~,~    \frac{1}{2}\omega(u,\nabla_XX)   \Big) \\            
&=&\Big( 2K(u,X)  -\frac{1}{2}aT\nabla_XX    +   \omega(u,X)TX    ~,~    \frac{1}{2}\omega(u,\nabla_XX)   \Big) 
\end{eqnarray*}
\end{proof}\noindent
%
%
%
%
\textbf{Proof of lemma \ref{lemma sigma}.}\\
\begin{proof}
\textbf{i}. The orthogonal set $\{A_k,B_k\}_k$  form a basis for the space of Hamiltonian vector fields. On the other hand
\begin{eqnarray*}
\ll TA_k, A_{k^\prime} \gg_\cg &=& \omega(A_k,A_{k^\prime}) \\
&=&  \int_{\T^2} - \lambda(|k|)\sin(k.\theta) \alpha(A_{k^\prime}) d\theta\\
&=&  \beta \lambda(|k|)^2 k_1   \int_{\T^2} \sin(k.\theta) \cos(k^\prime.\theta) d\theta   = 0 
\end{eqnarray*}
Similarly we have,
\begin{eqnarray*}
\ll TA_k, B_{k^\prime} \gg_\cg &=& \omega(A_k,B_{k^\prime}) \\
&=&  \int_{\T^2} - \lambda(|k|)\sin(k.\theta) \alpha(B_{k^\prime}) d\theta\\
&=&  \beta \lambda(|k|)^2 k_1   \int_{\T^2} \sin(k.\theta) \sin(k^\prime.\theta) d\theta   =   \beta k_1\lambda(|k|)^2   N \delta_{k^\prime}^k
\end{eqnarray*}
where $\delta_{k^\prime}^k$ is $1$ when $k=k^\prime$ and is zero otherwise. As a result $TA_k =  \beta k_1\lambda(|k|)^2N B_k$.\\
\textbf{ii.}The proof is similar to that of part \textbf{i}.\\
\textbf{iii. } \&  \textbf{iv.}
\begin{eqnarray*}
\omega(u,A_k)  &=&  \int_{\T^2} \psi_u \alpha(A_{k}) d\theta\\
&=&  -\beta \lambda(|k|) k_1   \int_{\T^2} \psi_u \cos(k.\theta) d\theta    =  - \beta \lambda(|k|) k_1 (\psi_u,\cos k.\theta)_{L^2},
\end{eqnarray*}
\begin{eqnarray*}
\omega(u,B_k)  &=&  \int_{\T^2} \psi_u \alpha(B_{k}) d\theta\\
&=&  -\beta \lambda(|k|) k_1   \int_{\T^2} \psi_u \sin(k.\theta) d\theta    =  - \beta \lambda(|k|) k_1 (\psi_u,\sin k.\theta)_{L^2}.
\end{eqnarray*}
\textbf{v.} Note that for any $k\in\mathbb{Z}^2$ 
\begin{eqnarray*}
\ll u, A_{k} \gg_\cg   &=&  \ll \nabla^\perp \psi_u  , \nabla^\perp (-\lambda_k\sin k.\theta  )\gg_\cg \\
&=& -\lambda(|k|) (\psi_u , -\Delta\sin k.\theta)_{L^2}\\
&=&   -\lambda(|k|) (k_1^2+k_2^2)(\psi_u , \sin k.\theta)_{L^2}
\end{eqnarray*}
and similarly 
\[
\ll u, B_{k}  \gg_\cg  =\lambda(|k|) (k_1^2+k_2^2)(\psi_u , \cos k.\theta)_{L^2}.
\]
As a result we have
\begin{eqnarray*}
\omega(u,A_k) TA_k &=&   - \beta^2 \lambda(|k|)^3 k_1^2 A (\psi_u,\cos k.\theta)_{L^2} B_k\\
&=&   -\frac{ \beta^2 \lambda(|k|)^3 k_1^2 A }{   \lambda(|k|) (k_1^2+k_2^2)}    \ll u, B_{k}  \gg_\cg     B_k\\
&=&   -\frac{ \beta^2 \lambda(|k|)^2 k_1^2  A}{    (k_1^2+k_2^2)}    \ll u, B_{k}  \gg_\cg     B_k 
\end{eqnarray*}
and similarly
\[
\omega(u,B_k) TB_k  = -\frac{ \beta^2 \lambda(|k|)^2 k_1^2 A }{    (k_1^2+k_2^2)}    \ll u, A_{k}  \gg_\cg     A_k 
\]
holds true. 

Set $\bar{B}_k:=\frac{  \lambda(|k|) k_1  }{   \sqrt{ k_1^2+k_2^2}} B_k$ and  $\bar{A}_k:=\frac{  \lambda(|k|) k_1  }{   \sqrt{ k_1^2+k_2^2}} A_k$. Since  $\{\bar{A},\bar{B}\}_{k\in\mathbb{Z}^2}$ form a basis for $\cg$ we get
\begin{eqnarray*}
&&  \sum_k\omega(u,A_k)TA_k+\omega(u,B_k)TB_k  =  \omega(u,A_k) TA_k\\
&=&   \sum_k   -\frac{ \beta^2 \lambda(|k|)^2 k_1^2  N}{    (k_1^2+k_2^2)}    \ll u, B_{k}  \gg_\cg     B_k    
-\frac{ \beta^2 \lambda(|k|)^2 k_1^2 A }{    (k_1^2+k_2^2)}    \ll u, A_{k}  \gg_\cg     A_k \\
&=&  - \beta^2   N  \sum_k   \frac{  \lambda(|k|)^2 k_1^2  }{    (k_1^2+k_2^2)}    \ll u, B_{k}  \gg_\cg     B_k    
+\frac{  \lambda(|k|)^2 k_1^2   }{    (k_1^2+k_2^2)}    \ll u, A_{k}  \gg_\cg     A_k \\
&=&  - \beta^2   N  \sum_k       \ll u, \Bar{B}_{k}  \gg_\cg     \Bar{B}_k  +     \ll u, \Bar{A}_{k}  \gg_\cg     \Bar{A}_k \\
&=&  -\beta^2N u :=-2\sigma u
\end{eqnarray*}
\end{proof}
\noindent\textbf{Acknowledgements}. I would like to express my gratitude to Ana Bela Cruzeiro, Cornelia Vizman and Bas Janssens  for their invaluable comments and suggestions, which have significantly improved the quality of this work.\\
Funded by the Deutsche Forschungsgemeinschaft (DFG, German
Research Foundation), project  517512794.
%
%
%
%
%

%
Ali Suri, Universit\"{a}t Paderborn, Warburger Str. 100,
33098 Paderborn, Germany.\\
email: asuri@math.upb.de\vfill
\end{document}